\definecolor{darkblue}{rgb}{0.0,0,0.7}
\newcommand{\darkblue}{\color{darkblue}}
\definecolor{darkred}{rgb}{0.68,0,0}
\newcommand{\darkred}{\color{darkred}}
\definecolor{darkgreen}{rgb}{0,.38,0}
\newcommand{\darkgreen}{\color{darkgreen}}
\newcommand{\defn}[1]{\emph{\darkblue #1}}
\newcommand{\defna}[1]{\emph{\darkred #1}}
\newcommand{\defng}[1]{\emph{\darkgreen #1}}
\setlist[enumerate]{
	label=\textnormal{({\roman*})},
	ref={\roman*}}
\def\th@plain{%
	\thm@notefont{}
	\itshape 
}
\def\th@definition{%
	\thm@notefont{}
	\normalfont 
}
\newtheorem{thm}{Theorem}[section]
\newtheorem*{claim*}{Claim}
\newtheorem{cor}[thm]{Corollary}
\newtheorem{prop}[thm]{Proposition}
\theoremstyle{definition}
\newtheorem{ex}[thm]{Example}
\newtheorem{rem}[thm]{Remark}
\numberwithin{figure}{section}
\numberwithin{equation}{section}
\def\zz{\mathbb Z}
\def\nn{\mathbb N}
\def\cc{\mathbb C}
\def\rr{\mathbb R}
\def\qqq{\mathbb Q}
\def\pp{\mathbb P}
\def\sm{\smallsetminus}
\def\la{\lambda}
\def\ga{\gamma}
\def\si{\sigma}
\def\de{\delta}
\def\al{\alpha}
\def\be{\beta}
\def\om{\omega}
\def\vk{\varkappa}
\def\cC{\mathcal C}
\def\cD{\mathcal D}
\def\cP{\mathcal P}
\def\cq{\mathcal Q}
\def\cQ{\mathcal Q}
\def\cZ{\mathcal Z}
\def\ssu{\subset}
\def\wt{\widetilde}
\def\<{\langle}
\def\>{\rangle}
\def\ComW{\text{\small {\rm W}}}
\def\vrV{\text{\small {\rm V}}}
\def\height{\text{{\rm height}}}
\def\GL{ {\text {\rm GL} } }
\def\rR{ {\textsc {\rm R} } }
\def\rU{ {\text {\rm U} } }
\def\Ups{{\small {\Upsilon}}}
\def\0{{\mathbf 0}}
\def\.{\hskip.06cm}
\def\ts{\hskip.03cm}
\def\bx{{\textbf{\textit{x}}}}
\def\ba{\textbf{\textbf{a}}}
\def\bc{\textbf{\textbf{c}}}
\def\La{\Lambda}
\newcommand{\ord}{\mathrm{ord}}
\newcommand{\SSYT}{\operatorname{SSYT}}
\def\atom{\textsf{{atom}}}
\def\atomrm{\textsf{\em {atom}}}
\def\.{\hskip.06cm}
\def\ts{\hskip.03cm}
\def\nin{\noindent}
\newcommand{\textsu}[1]{\textup{\textsf{#1}}}
\newcommand{\ComCla}[1]{\textup{\textsu{#1}}}
\newcommand{\sharpP}{\ComCla{\#P}}
\newcommand{\SP}{\ComCla{\#P}}
\newcommand{\SBQP}{\ComCla{\#BQP}}
\newcommand{\GapP}{\ComCla{GapP}}
\newcommand{\GapFP}{\ComCla{GapP/FP}}
\newcommand{\GapPP}{\ComCla{GapP}_{\ge 0}}
\newcommand{\Sigmap}{\ensuremath{\Sigma^{{\textup{p}}}}}
\newcommand{\NP}{\ComCla{NP}}
\renewcommand{\P}{\ComCla{P}}
\newcommand{\PH}{\ComCla{PH}}
\newcommand{\FP}{\ComCla{FP}}
\def\SP{\sharpP}
\def\poly{{\P}}
\newcommand{\inv}{\operatorname{{\rm inv}}}
\def\QSym{ {\text {\sc QSym} } }
\newcommand{\Sc}{\mathfrak{S}}
\newcommand{\FS}{\mathfrak{F}}
\newcommand{\code}{{\sf{code}}}
\newcommand\wtx{{\tt wt}}
\title[Signed combinatorial interpretations]{Signed combinatorial interpretations \\ in algebraic combinatorics}
\author[Igor Pak \. \and \. Colleen Robichaux]{Igor Pak$^\star$  \. \and \.  Colleen Robichaux$^\star$}
\thanks{\thinspace ${\hspace{-.45ex}}^\star$Department of Mathematics,
UCLA, Los Angeles, CA 90095, USA. Email:  \texttt{\{pak,robichaux\}@math.ucla.edu}}
\thanks{\today}
\begin{document}

\begin{abstract}
We prove the existence of signed combinatorial interpretations for
several large families of structure constants.  These families include
standard bases of symmetric and quasisymmetric polynomials, as well as
various bases in  Schubert theory.  The results are stated in the
language of computational complexity, while the proofs are based on the
effective M\"obius inversion.
\end{abstract}

\maketitle

\nin

\section{Introduction}\label{s:intro}

\subsection{Foreword} \label{ss:intro-foreword}
Back in 1969, the renown British philosophers Jagger and Richards made a profound if
not wholly original observation, that ``you can't always get what you want'' \cite{JR69}.
Only recently, this piece of conventional wisdom started to gain acceptance in algebraic
combinatorics, at least when it comes to \defna{combinatorial interpretations}, cf.\
\cite{Pak}.

The interest in combinatorial interpretations is foundational in the area,
and goes back to the early papers by Young, where he defined what we now call
{standard Young tableaux}, to count degrees of irreducible $S_n$ characters.
After over a century of progress and numerous successes finding combinatorial
interpretations, a dozen or so open problems remain a thorn in our side
(see e.g.\ \cite{Sta00} and \cite{Pak-OPAC}).

Negative results started to arrive a few years ago, all of the form
``a combinatorics interpretation of [some numbers] implies a collapse
of the polynomial hierarchy'', see \cite{CP23,IP22,IPP22}.   Informally,
these conditional results show that a major conjecture in theoretical
computer science, a close relative to \ts $\poly \ne \NP$, implies that
combinatorial interpretations \emph{do not exist} in some cases.

Setting aside the precise technical meaning of a ``combinatorial
interpretation'' (see below), one can ask what's the best one can do?
What should be the right replacement of a ``combinatorial interpretation''?
We suggest that \defna{signed combinatorial interpretations} \ts
are a perfect answer to these questions.

To understand why, first note that having signs is unavoidable when the numbers
can be both positive and negative, and when the sign is hard to compute.
A prototypical example is the $S_n$ character value \ts $\chi^\la(\mu)$ \ts
given by the Murnaghan--Nakayama rule (see below).  This is why it's so
puzzling to see signed combinatorial interpretations for nonnegative structure
constants when no (unsigned) combinatorial interpretation is known, e.g.\
the Kronecker and Schubert coefficients.

In this paper, we make a systematic study and prove signed combinatorial
interpretations for structure constants
for many families of symmetric functions, their relatives, and generalizations.
We present signed combinatorial interpretations in all cases without fail,
leading to the following meta observation:

\smallskip

\begin{center}\begin{minipage}{7.5cm}%
{\hskip-.5cm\defn{In algebraic combinatorics, all integral constants have signed combinatorial
interpretations.}}
\end{minipage}\end{center}

\smallskip

In conclusion, let us mention that signed combinatorial interpretations
are useful for many applications, ranging from upper bounds to asymptotics,
from analysis of algorithms to computational complexity, see below.
As it turns out, ``if you try sometime you'll find you get what you need'',
fully consistent with the philosophy in~\cite{JR69}.

\smallskip
\subsection{Signed combinatorial interpretations} \label{ss:intro-signed}
We postpone the discussion of computational complexity classes until
later in this section.  Let us briefly remind the reader of the
complexity classes \ts $\NP$, \ts $\SP$, \ts and \ts $\GapP$.

Denote \ts $W:=\{0,1\}^\ast$ \ts and \ts $W_n :=\{0,1\}^n$ \ts be sets of \emph{words}.
The length \ts $|w|$ \ts is called the \emph{size} of~$w\in W$, so \ts $|w|=n$ \ts
for all \ts $w\in W_n\ts$.
A \emph{language} \ts is defined as \ts $A \subseteq W$.
Denote \ts $A_n :=  A \cap W_n$.  We say that \ts $A$ \ts
is in \ts $\NP$ \ts if the membership \ts $[w\in^? A]$ \ts can
be decided in polynomial time (in the size $|w|$),
by a Turing Machine.  We think of \ts $A_n$ \ts
as the set of \emph{combinatorial objects} of size~$n$.  For example, partitions,
Young tableaux, permutations, etc., are all combinatorial objects when encoded
appropriately.

Let \ts $f: W\to \nn$ \ts be an integer function.
We say that $f$ has a \defn{combinatorial interpretation}
(also called a \emph{combinatorial formula}), if \ts $f\in \SP$.
This means that there is a language \ts $B\subseteq W^2$ \ts
in~$\NP$, such that for all \ts $w\in W$ \ts we have \ts
$f(w) = \#\{u \. : \. (w,u) \in B\}$.  Informally, we say that the language
\ts $B$ \ts \defn{counts the numbers}~$\{f(w)\}$.
Note that the decision problem \ts $[f(w)>^?0]$ \ts is also
in~$\NP$. We refer to \cite{Pak-OPAC} for the background
and many examples.

In algebraic combinatorics, all standard combinatorial interpretations are
in~$\SP$.  These include \emph{character degrees} \ts $f^\la = \chi^\la(1)$, \ts
\emph{Kostka numbers} \ts $K_{\la\mu}$, \ts and the \emph{Littlewood--Richardson coefficients}
\ts $c^\la_{\mu\nu}$, see e.g.\ \cite{Mac95,Sag01,Sta-EC}.
Indeed, in each case these are counted by certain Young tableaux,
the validity of which can be verified in polynomial time.\footnote{There is a subtlety
here in the encoding of partitions.  Here and throughout the paper we use
\emph{unary encoding}.  Although some of our results generalize to the binary encoding,
this would require more involved arguments, see~$\S$\ref{ss:finrem-binary}.}

Let \ts $f: \{0,1\}^\ast\to \zz$ \ts be an integer function.
We say that function $f$ has a \defn{signed combinatorial interpretation}
(also called a \emph{signed combinatorial formula}),
if \ts $f = g-h$ \ts for some \ts $g,h\in \SP$.  The set of such functions
is denoted \ts $\GapP := \SP-\SP$.  This class is well studied in the
computational complexity literature, see e.g.\ \cite{For97,HO02}.
We use \ts $\GapPP$ \ts to denote the subset of nonnegative \ts $f \in \GapP$.
Note that \ts $[f(w)>^?0]$ \ts is not always in~$\NP$ (see below).

In algebraic combinatorics, there are many natural examples of signed combinatorial
interpretations. As we mentioned above, these include \emph{character values} \ts
$\chi^\la(\mu)$ \ts via the \emph{Murnaghan--Nakayama rule}
(see e.g.\ \cite{JK81,Sag01,Sta-EC}).  Another example is the
\emph{inverse Kostka numbers} \ts $K^{-1}_{\la\mu}$ \ts defined as the entry
in the inverse Kostka matrix \ts $(K_{\la\mu})^{-1}$, given by the
\emph{E\u{g}ecio\u{g}lu--Remmel rule} \cite{ER90}.  In both cases,
the rules subtract the number of certain rim hook tableaux, some with
a positive sign and some with a negative, where the sign is easily computable.

As we mentioned above, there are many examples of signed combinatorial interpretations
for nonnegative functions, some of which are collected in \cite{Pak-OPAC}.
Famously, \emph{Kronecker coefficients} \ts $g(\la,\mu,\nu)$ \ts are given by the
large signed summation of the numbers of $3$-dimensional contingency arrays,
see \cite{BI08,CDW12,PP17}.  Another celebrated example is the \emph{Schubert coefficients} \ts
(also called \emph{Schubert structure constants}) \ts $c^\ga_{\al\be}$ \ts
given by the \emph{Postnikov--Stanley formula} \ts  in terms of the number of
chains in the Bruhat order \cite[Cor.~17.13]{PS09}.

\smallskip
\subsection{Main results} \label{ss:intro-main}
Let \ts $\rR$ \ts be a ring and let \ts $\Ups:=\{\xi_\al\}$ \ts be a linear basis
in~$\rR$, where the indices form a set \ts $A=\{\al\}$ \ts of combinatorial
objects.
The \defn{structure constants} \ts $\big\{c(\al,\be,\ga)\big\}$ \ts
for \ts $\Ups$ \ts are defined as
$$
\xi_\al \cdot \xi_\be \, = \, \sum_{\ga\in A} \. c(\al,\be,\ga) \. \xi_\ga \ \ \text{where} \ \ \al,\be\in A\ts.
$$
When the structure constants are integral, one can ask whether the function \ts $\bc: A^3 \to \zz$ \ts
is in $\GapP$, i.e.\
has a signed combinatorial interpretation.   Additionally, when they are nonnegative,
one can ask if \ts $\bc$ \ts is in $\SP$, i.e.\ has (the usual) combinatorial interpretation.

We postpone the definitions of various rings and bases to later sections, moving
straight to results both known and new.  The following result is routine,
well-known, and is included both for contrast and for completeness:

\begin{thm}[{\rm classic structure constants}{}]\label{t:main-classic}
Let \ts $\La_n = \cc[x_1,\ldots,x_n]^{S_n}$ \ts denote symmetric polynomials in $n$ variables.
The following bases in \ts $\La_n$ \ts have structure constants in~$\SP:$
    \begin{itemize}
       \item Schur polynomials \ts $\{s_\la\.:\. \ell(\la) \le n\}$,
       \item monomial symmetric polynomials \ts $\{m_\la\.:\. \ell(\la) \le n\}$,
       \item power sum symmetric polynomials \ts $\{p_\la\.:\. \la_1 \le n\}$,
       \item elementary symmetric polynomials \ts $\{e_\la\.:\. \la_1 \le n\}$, and
       \item complete homogeneous symmetric polynomials \ts $\{h_\la\.:\. \la_1 \le n\}$.
\end{itemize}
\end{thm}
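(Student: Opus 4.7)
The plan is to handle the five bases separately, each via a classical rule, and verify that the resulting witnesses admit polynomial-time verification. Because partitions are encoded in unary, all combinatorial objects involved (tableaux, compositions, ballot words) have size polynomial in the input, so the reduction to $\SP$-membership is immediate in each case.

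For the three multiplicative bases $\{e_\la\}$, $\{h_\la\}$, and $\{p_\la\}$, one has $e_\la \. e_\mu = e_{\la \cup \mu}$ and likewise for $h$ and $p$, where $\la \cup \mu$ denotes the sorted multiset union. The structure constants are thus $\{0,1\}$-valued indicators of $\nu = \la \cup \mu$, trivially in $\SP$, and the basis constraint $\la_1 \le n$ is preserved under the product. For the monomial basis, $m_\la \. m_\mu = \sum_\nu a^\nu_{\la\mu} \. m_\nu$; fixing any composition $\gamma$ with underlying partition $\nu$, the coefficient $a^\nu_{\la\mu}$ equals the number of pairs of weak compositions $(\al,\be)$ of length $n$ such that $\al$ is a rearrangement of $\la$ (padded by zeros), $\be$ is a rearrangement of $\mu$, and $\al + \be = \gamma$. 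An NP verifier checks all three conditions in polynomial time, so $a^\nu_{\la\mu} \in \SP$.

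For the Schur basis, the structure constants are the Littlewood--Richardson coefficients $c^\nu_{\la\mu}$, already recalled in $\S$\ref{ss:intro-signed}. They are counted by LR tableaux, i.e., semistandard Young tableaux of skew shape $\nu/\la$, content $\mu$, and ballot (lattice) reverse reading word; see e.g.\ \cite{Mac95,Sag01,Sta-EC}. Verification of these three conditions is polynomial-time, so $c^\nu_{\la\mu} \in \SP$. The basis constraint $\ell(\nu) \le n$ is automatic: if $\ell(\la), \ell(\mu) \le n$, then column-strictness of an LR tableau with entries in $\{1,\dots,n\}$ forces $\ell(\nu) \le n$ as well. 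No genuine obstacle arises; the theorem is included as a sanity check and a counterpoint to the signed interpretations established later in the paper.
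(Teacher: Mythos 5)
Your argument is essentially the paper's own proof (which is equally terse): the multiplicative bases are trivial, the monomial structure constants count pairs of rearrangements summing to a fixed target, and the Schur case is the LR rule. One side remark is false, however: the constraint \ts $\ell(\nu)\le n$ \ts is \emph{not} automatic from \ts $\ell(\la),\ell(\mu)\le n$. Column-strictness of an LR tableau of shape \ts $\nu/\la$ \ts only gives \ts $\ell(\nu)\le \ell(\la)+\ell(\mu)$; e.g.\ for \ts $n=1$ \ts one has \ts $s_{1}\cdot s_{1}=s_{2}+s_{11}$ \ts with \ts $c^{(1,1)}_{(1),(1)}=1$ \ts and \ts $\ell(\nu)=2$. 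This does not damage the theorem: the offending terms have \ts $s_\nu=0$ \ts in \ts $\La_n$, so the structure constants of the basis \ts $\{s_\la : \ell(\la)\le n\}$ \ts are exactly the LR coefficients restricted to triples with \ts $\ell(\nu)\le n$, and these remain in \ts $\SP$. You should simply delete the "automatic" claim rather than rely on it.
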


The last four of these items are completely straightforward and follow
directly from the definition.   On the other hand, the first item corresponding
to the Littlewood--Richardson (LR) coefficients is highly nontrivial.  Rather than
reprove it, we give an elementary proof of a weaker claim, that
LR-coefficients are in~$\GapP$.   This sets us up for several generalizations.

\smallskip

First, we consider deformations of Schur polynomials.  Let \ts $q,t,\al \in \qqq$ \ts
be fixed rational numbers.

\begin{thm}\label{t:main-qt}
    The following bases in $\La_n$ have structure coefficients in~$\GapFP:$
    \begin{itemize}
        \item Jack symmetric polynomials \ts $\{P_{\la}(x;\alpha)\.:\. \ell(\la) \le n\}$, where \ts $\al>0$,
        \item Hall--Littlewood polynomials \ts $\{P_{\la}(x;t)\.:\. \ell(\la) \le n\}$, where \ts $0\le t <1$,  and
        \item Macdonald symmetric polynomials \ts $\{P_{\la}(x;q,t)\.:\. \ell(\la) \le n\}$, where \ts $0\le q,t <1$.
    \end{itemize}
\end{thm}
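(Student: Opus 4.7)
The strategy is to reduce to the monomial basis via the defining triangularity in dominance order~$\prec$:
$$
P_\lambda \ = \ m_\lambda \, + \, \sum_{\rho \prec \lambda} u_{\lambda\rho} \. m_\rho,
$$
where $u_{\lambda\rho} \in \qqq$ depends on the fixed parameters. By Theorem~\ref{t:main-classic}, the monomial structure constants $b^\sigma_{\rho\tau}$ lie in $\SP$, so it suffices to invert this triangular system. The first step is to verify that, for fixed rational $\al$, $t$, or $(q,t)$, each scalar $u_{\lambda\rho}$ is a rational number with polynomial bit-size numerator and denominator (under unary encoding of partitions), and is computable in~$\FP$. For Hall--Littlewood and Jack this is immediate from the explicit product formulas in~\cite{Mac95}. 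For Macdonald, we invoke the Pieri recursion for the integral form $J_\lambda = c_\lambda \. P_\lambda$, whose coefficients are products of factors $1 - q^a t^b$ with polynomially bounded arm/leg exponents, keeping all intermediate rationals of polynomial bit-size.

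Let $U = (u_{\lambda\rho})$ be the resulting lower unitriangular transition matrix; its inverse $U^{-1}$ is again lower unitriangular with $\FP$-computable rational entries via back-substitution. Equating the monomial expansions of both sides of \ts $P_\lambda \. P_\mu = \sum_\nu c^\nu_{\lambda\mu} \. P_\nu$ \ts yields
$$
c^\nu_{\lambda\mu} \ = \ \sum_{\rho,\, \tau,\, \sigma} \. u_{\lambda\rho} \. u_{\mu\tau} \. b^\sigma_{\rho\tau} \. (U^{-1})_{\sigma\nu}\ts,
$$
with indices ranging over partitions in the relevant dominance intervals. Choose an $\FP$-computable common denominator $D = D(\lambda, \mu, \nu)$ of the finitely many rationals \ts $u_{\lambda\rho} \ts u_{\mu\tau} \ts (U^{-1})_{\sigma\nu}$ \ts above; then \ts $D \cdot c^\nu_{\lambda\mu}$ \ts is an integer signed sum of $\SP$-values weighted by $\FP$-computable integers. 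A standard nondeterministic construction then places this quantity in $\GapP$: a machine guesses $(\rho, \tau, \sigma)$, deterministically computes the signed integer weight in $\FP$, and invokes the $\NP$-verifier underlying $b^\sigma_{\rho\tau} \in \SP$, using duplicate branches to realize the magnitude and the sign controlling accept versus reject. Dividing by $D$ places $c^\nu_{\lambda\mu} \in \GapFP$.

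The main obstacle is step one for Macdonald, since naive Gram--Schmidt with respect to the inner product $\langle p_\lambda, p_\mu\rangle_{q,t} = \delta_{\lambda\mu} \. z_\lambda \prod_i \frac{1-q^{\lambda_i}}{1-t^{\lambda_i}}$ blows up denominators exponentially; the Pieri-based approach through $J_\lambda$ is essential here. A secondary subtlety is that the number of partitions contributing to the sum is superpolynomial, but this is harmless because $\GapP$ is closed under nondeterministic guessing of polynomial-size objects (partitions in unary). The stated parameter ranges ($\al > 0$, $0 \le t < 1$, $0 \le q, t < 1$) ensure non-degeneracy of the underlying inner products and thus that the $P_\lambda$ and the matrix $U$ are well-defined to begin with.
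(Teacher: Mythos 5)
Your overall architecture (unitriangularity of $P_\la$ over the monomial basis with respect to dominance, inversion of the transition matrix, and recombination with the monomial structure constants $T(\rho,\tau,\si)\in\SP$) is the same as the paper's. But your first step contains a genuine error: the transition coefficients $u_{\la\rho}=[m_\rho]\ts P_\la$ are \emph{not} computable in $\FP$, and there is no ``explicit product formula'' for them. At $q=t$ (e.g.\ $q=t=1/2$, allowed by the hypotheses) one has $P_\la(\bx;q,q)=s_\la$, and at $t=0$ the Hall--Littlewood polynomial is again $s_\la$, so in these admissible specializations $u_{\la\rho}$ is the Kostka number $K_{\la\rho}$, which is $\SP$-complete to compute. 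What is true --- and what the paper uses --- is that each \emph{individual tableau weight} $\psi_T(q,t)$ is given by a product formula and is poly-time computable, so that $u_{\la\rho}=\sum_{T\in\SSYT(\la,\rho)}\psi_T(q,t)$ is a sum of $\FP/\FP$ values over nondeterministically guessable objects, placing it in $\GapFP$ (after clearing an $\FP$ denominator), not in $\FP$. The Pieri/$J_\la$ detour does not repair this: it controls bit-sizes but not the superpolynomial number of terms.

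The same problem recurs, independently, in your treatment of $U^{-1}$: ``back-substitution'' on a matrix indexed by the $p(n)=e^{\Theta(\sqrt n)}$ partitions of $n$ is not a polynomial-time procedure, so the entries of $U^{-1}$ cannot be declared $\FP$-computable this way even if the entries of $U$ were. The correct tool is the effective M\"obius inversion of the paper (Proposition~\ref{p:rho}): since the dominance order has polynomial height, $(U^{-1})_{\si\nu}$ is a signed sum over chains $\si\to z_1\to\cdots\to\nu$ of products of entries of $U$, each chain having polynomial length; a nondeterministic machine guesses the chain together with the tableaux witnessing each factor, which yields a $\GapP$ numerator over an $\FP$ denominator. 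With the entries of $U$ and $U^{-1}$ placed in $\GapFP$ rather than $\FP$, your final recombination and common-denominator step goes through and gives $c^\nu_{\la\mu}\in\GapFP$; the parenthetical remark that $\GapP$ tolerates superpolynomially many summands via nondeterministic guessing is correct, but it must be applied to these two earlier steps as well, not only to the last one.
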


Here \ts $\GapFP$ \ts is a class of rational functions which can be written as \ts
$f/g$ \ts where $f\in \GapP$ \ts and \ts $g\in \FP$ \ts is a function which can be computed in
polynomial time.  Essentially, we prove that these structure constants are rational, the
numerators have signed combinatorial interpretations, and the denominators have a nice
product formula.

\smallskip

Second, we consider quasisymmetric polynomials which are somewhat
intermediate between symmetric and general polynomials:

\begin{thm}[{\rm quasisymmetric structure constants}{}]\label{t:main-quasi}
Let \ts $\QSym_n\subseteq\cc[x_1,\ldots,x_n]$ \ts be the ring of quasisymmetric polynomials
in $n$ variables.  The following bases have structure constants in~$\SP:$
    \begin{itemize}
        \item monomial quasisymmetric polynomial \ts $\{M_{\al}\}$, and
        \item fundamental quasisymmetric polynomials \ts $\{F_{\al}\}$.
\end{itemize}
The following bases have structure constants in~$\GapP:$
    \begin{itemize}
        \item dual immaculate polynomials \ts $\{\Sc_\al^*\}$, and
        \item quasisymmetric Schur polynomials \ts $\{\mathcal{S}_\al\}$.
\end{itemize}
The following  bases have structure constants in~$\SP/\FP:$
    \begin{itemize}
        \item  type 1 quasisymmetric power sum \ts
        $\{\Psi_\al\}$,
        \item type 2 quasisymmetric power sum \ts
        $\{\Phi_\al\}$, and
        \item combinatorial quasisymmetric power sum \ts
        $\{\mathfrak{p}_\al\}$.
\end{itemize}
Here we have \ts $\al$ \ts is a composition into at most $n$ positive parts.
\end{thm}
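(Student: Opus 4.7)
The plan is to handle the three groups of bases separately, using successively more elaborate tools.

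For $\{M_\alpha\}$ and $\{F_\alpha\}$, I would invoke the classical product rules directly. The product $M_\alpha\. M_\beta$ expands nonnegatively in $\{M_\gamma\}$ via the overlapping shuffle (quasi-shuffle) rule, where each structure constant counts the overlapping shuffles of $\alpha$ and $\beta$ producing $\gamma$. Similarly, $F_\alpha\. F_\beta$ expands positively in $\{F_\gamma\}$ via the shuffle product of permutations with prescribed descent compositions. In both cases, an overlapping shuffle (resp.\ a shuffled pair of permutations of the right descent type) is an $\NP$\ts-witness of polynomial size in the unary-encoded input, placing the structure constants in $\SP$.

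For the dual immaculate basis $\{\Sc_\alpha^*\}$ and the quasisymmetric Schur basis $\{\mathcal{S}_\alpha\}$, I would apply effective M\"obius inversion against the $\{M_\alpha\}$ basis already handled. Both bases expand nonnegatively in $\{M_\alpha\}$ via tableau-style rules (immaculate tableaux for $\Sc_\alpha^*$, composition tableaux for $\mathcal{S}_\alpha$), yielding an $\SP$\ts-valued transition matrix $T$ which is unitriangular with respect to a natural partial order on compositions. Hence $T^{-1}$ has integer entries expressible, by inclusion--exclusion, as a signed sum over chains in this poset weighted by products of $T$\ts-entries. Since chains have length at most $|\alpha|$ (polynomial in the unary input) and each weighted chain admits a polynomial-size encoding that is polynomial-time verifiable, $T^{-1}$ lies in $\GapP$. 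The identity
\[
c^{\mathcal{S}}_{\alpha\beta\gamma} \, = \, \sum_{\mu,\nu,\rho} \. T_{\alpha\mu}\. T_{\beta\nu}\. c^M_{\mu\nu\rho}\. (T^{-1})_{\rho\gamma}
\]
then expresses $c^{\mathcal{S}}$ as a polynomial-time concatenation of four $\NP$\ts-witnesses (one for each factor, with the sign inherited from $T^{-1}$), giving $c^{\mathcal{S}} \in \GapP$. The same argument handles $\{\Sc_\alpha^*\}$.

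For the three quasisymmetric power sum bases $\{\Psi_\alpha\}$, $\{\Phi_\alpha\}$, $\{\mathfrak{p}_\alpha\}$, the expansions into $\{M_\alpha\}$ are rational with denominators given by explicit products attached to $\alpha$ (the classical $z$\ts-factor for $\mathfrak{p}_\alpha$, or products of partial sums of parts for $\Psi_\alpha$ and $\Phi_\alpha$). All these denominators are in $\FP$ under unary encoding. The numerators are nonnegative integer combinations of $\SP$\ts-counts, and the unitriangular inversion adds at worst a further $\FP$\ts-denominator. Bundling into a single common denominator yields each structure constant as $f/g$ with $f\in\SP$ and $g\in\FP$, as claimed.

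The main obstacle is in the second group: one must verify that the inverse transition matrix $T^{-1}$ falls in $\GapP$ rather than merely $\GapFP$. This reduces to (i) showing the composition poset governing the expansion has polynomially bounded rank and an easily decodable chain structure, and (ii) choosing the chain witnesses so that the weighted-chain enumeration is $\NP$\ts-verifiable. This is precisely the content of the effective M\"obius inversion framework advertised in the abstract, and it is where the triangularity and positivity of the underlying tableau rules are essential.
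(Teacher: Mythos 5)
Your treatment of the first two groups matches the paper's: the quasi-shuffle rule for $\{M_\al\}$, Gessel's shuffle rule for $\{F_\al\}$, and for $\{\Sc_\al^*\}$ and $\{\mathcal{S}_\al\}$ the unitriangular $\SP$-valued expansion into $\{M_\al\}$ followed by effective M\"obius inversion over a poset of polynomial height and the conjugation identity. (One small caveat: for $\{\mathcal{S}_\al\}$ the unitriangularity holds not for the dominance order itself but for a modified order $\unlhd'$ built from dominance on the sorted partitions, and one must check this order still has polynomial height; this is a detail, not an obstruction.)

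There is, however, a genuine gap in your third part. You claim that after inverting the (tri\-angular, not unitriangular) transition matrix for $\{\Psi_\al\}$, $\{\Phi_\al\}$, $\{\mathfrak{p}_\al\}$, ``the numerators are nonnegative integer combinations of $\SP$-counts'' and that the inversion ``adds at worst a further $\FP$-denominator,'' concluding $\SP/\FP$. This is not what the M\"obius-inversion argument yields: the inverse of a nonnegative triangular matrix is computed as an \emph{alternating} sum over chains, so the inverse entries --- and hence the numerators of the structure constants --- land only in $\GapP$, giving $\GapFP$ rather than $\SP/\FP$. To obtain the stronger positive statement one cannot use the generic machinery at all; one must invoke the explicit positive product formulas for these bases (the shuffle/overlapping-shuffle rules of Ballantine--Daugherty--Hicks--Mason--Niese for $\Psi_\al$ and $\Phi_\al$, and of Aliniaeifard--Wang--van Willigenburg for $\mathfrak{p}_\al$), which express each product as an explicitly computable positive rational multiple of a sum over shuffles. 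This is exactly what the paper does, after noting that the inversion argument by itself only gives $\GapFP$. As written, your argument does not establish the $\SP/\FP$ claim for the third group.
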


\smallskip

The first two items go back to Gessel \cite{Ges84}, while the rest are new.
Next, recall that Schubert polynomials mentioned above generalize Schur polynomials
and form a linear basis in the ring of \emph{all} \ts polynomials.  The following result
can be viewed as the analogue of Theorem~\ref{t:main-classic} in this more general setting.

\begin{thm}[{\rm polynomial structure constants}{}]\label{t:main-schubert}
In the ring of polynomials $\cc[x_1,\ldots,x_n]$, the following bases have structure
constants in~$\SP:$
\begin{itemize}
         \item monomial slide polynomials \ts $\{\mathfrak{M}_{\al}\}$, and
         \item fundamental slide polynomials \ts $\{\FS_\al\}$.
\end{itemize}
The following bases have structure
constants in~$\GapP:$
    \begin{itemize}
         \item Demazure atoms \ts $\{{\atomrm}_{\al}\}$,
         \item key polynomials \ts $\{\kappa_\al\}$,
         \item Schubert polynomials \ts $\{\mathfrak{S}_{\al}\}$,
         \item Lascoux polynomials \ts $\{\mathfrak{L}_{\al}\}$, and
    \item Grothendieck polynomials \ts $\{\mathfrak{G}_{\al}\}$.
    \end{itemize}
Here we have \ts $\al\in \nn^n$ \ts is a composition into $n$ nonnegative parts.
\end{thm}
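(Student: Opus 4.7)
The plan is to treat the monomial slide polynomials $\{\mathfrak{M}_\alpha\}$ and fundamental slide polynomials $\{\FS_\alpha\}$ as \emph{universal} intermediate bases, analogous to the role played by monomials in Theorem~\ref{t:main-classic} and by $\{M_\alpha\}$, $\{F_\alpha\}$ in Theorem~\ref{t:main-quasi}. Every other basis in the statement will be reduced to one of these through an effectively invertible, triangular change of basis, and the conclusion will follow from effective M\"obius inversion.

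First, for the slide polynomials themselves, the products $\mathfrak{M}_\alpha \cdot \mathfrak{M}_\beta$ and $\FS_\alpha \cdot \FS_\beta$ are governed by the Assaf--Searles quasi-shuffle rules, whose outputs are positive sums over explicit combinatorial objects (slide shuffles of the input compositions). These rules place the corresponding structure constants directly in $\SP$.

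Next, for each of the remaining bases $\{b_\alpha\}$---Demazure atoms, key polynomials, Schubert, Lascoux, and Grothendieck polynomials---I would appeal to a known positive slide expansion. Demazure atoms are by definition sums of monomials indexed by semistandard skyline fillings, so expand positively in $\{\mathfrak{M}_\alpha\}$; key polynomials are positive sums of Demazure atoms; Schubert polynomials expand positively into fundamental slide polynomials (Assaf--Searles); and Lascoux and Grothendieck polynomials are the K-theoretic lifts, admitting analogous positive fundamental-slide expansions up to a degree shift controlled polynomially by $|\alpha|$. In each case, writing the change-of-basis as a matrix $A=(A_{\alpha\beta})$, one obtains $A \in \SP$ and verifies that $A$ is triangular with unit diagonal with respect to a natural partial order $\precc$ on compositions (a refinement of dominance, further graded by total degree in the K-theoretic case).

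The central tool is then effective M\"obius inversion: since $A$ is $\precc$-triangular with unit diagonal and $\SP$ entries, the inverse $A^{-1}$ lies in $\GapP$ and is computable by the standard recursion
\[
(A^{-1})_{\alpha\gamma} \, = \, -\sum_{\alpha \ne \delta \precc \gamma} \, A_{\alpha\delta} \, (A^{-1})_{\delta\gamma}\.,
\]
which under unary encoding terminates with polynomial depth and polynomially many intermediate terms. The target structure constants then factor as
\[
c_{\alpha\beta}^\gamma \, = \, \sum_{\delta,\epsilon,\eta} \, A_{\alpha\delta} \, A_{\beta\epsilon} \, D^\eta_{\delta\epsilon} \, (A^{-1})_{\eta\gamma}\.,
\]
where $D^\eta_{\delta\epsilon}$ denotes the slide-basis structure constants, whence $c^\gamma_{\alpha\beta} \in \GapP$ by closure of $\GapP$ under polynomially bounded sums of products with $\SP$ functions.

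The main obstacle will be making triangularity \emph{effective} in the sense above: one needs a polynomial-time computable rank function refining $\precc$ so that the M\"obius recursion has polynomial depth and width, and, for the inhomogeneous Lascoux and Grothendieck polynomials, an a priori polynomial bound on the degrees of compositions that can contribute to $b_\alpha \cdot b_\beta$, so the truncation to a finite triangular problem is benign. The careful bookkeeping of supports and degree bounds in the K-theoretic cases is where the real work lies; once this is in place, the passage to $\GapP$ is formal.
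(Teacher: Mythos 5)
Your overall architecture coincides with the paper's: express each basis unitriangularly in an intermediate basis whose structure constants are already in $\SP$, invert the change of basis by effective M\"obius inversion over a poset of polynomial height, and conjugate; and your first bullet (the Assaf--Searles shuffle rules for $\{\mathfrak{M}_\al\}$ and $\{\FS_\al\}$) matches the paper. The genuine gap is in the expansions you assume into the slide bases. Demazure atoms are \emph{not} nonnegative combinations of monomial slide polynomials: in the paper's own example, ${\atomrm}_{(0,2,1)}=x_1x_2x_3+x_2^2x_3$, and the only monomial slide polynomial whose support contains $x_2^2x_3$ is $\mathfrak{M}_{(0,2,1)}$, which also contains $x_1^2x_2$; that monomial does not occur in the atom and cannot be cancelled by other nonnegative terms, so no positive expansion exists. (Being a positive sum of monomials does not imply being a positive sum of monomial slides -- that inference is the non sequitur.) Key polynomials, as you route them through atoms, inherit this problem (they are in fact fundamental-slide positive, so that case could be salvaged). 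Likewise, Grothendieck and Lascoux polynomials are known to expand with predictable signs into the \emph{glide} basis of Pechenik--Searles, not into the fundamental slide basis; the sign-predictable slide expansion you invoke is not in the literature and would itself need proof. So for at least three of the five $\GapP$ bullets, the matrix $A$ you need is not known to be in $\SP$ (for atoms it is provably not nonnegative), and its unitriangularity with respect to a polynomial-height order is not established.

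The repair -- and the paper's actual route -- is to take the intermediate basis to be the monomials $\{\bx^\al\}$ themselves. Then the intermediate structure constants are trivial ($\bx^\om\cdot\bx^\rho=\bx^{\om+\rho}$), the Kostka-type coefficients $K_{\al,\om}$ are in $\SP$ directly from the combinatorial models (skyline fillings, Kohnert and K-Kohnert diagrams, ladder and K-ladder moves on pipe dreams), and unitriangularity with respect to the dominance order on $\vrV_{n}$, which has height $O(n^3)$, follows because each move strictly increases dominance (Proposition~\ref{prop:hompoly}). One further caution: your recursion for $A^{-1}$ does not have ``polynomially many intermediate terms''; the number of $\de$ with $\al\prec\de\preccurlyeq\ga$ is generally exponential. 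The correct justification, as in Proposition~\ref{p:rho}, is to unwind the recursion into a signed sum over chains: each chain has polynomial length by the height bound and is a polynomial-size witness, so the positive and negative parts are each in $\SP$ even though the number of chains is exponential.
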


As we mentioned above, the result for Schubert polynomials follows from
the work of Postnikov and Stanley \cite{PS09}.  We reprove this result
in a simpler (but closely related) way, leading to results
in other cases.

Finally, we consider \defn{plethysm}.  This is a general notion (see \cite{NR85}), which is
especially natural in the ring of symmetric functions, where it can be defined as follows.
Let \ts $\pi: \GL(V)\to \GL(W)$ \ts and $\rho: \GL(W)\to \GL(U)$ \ts be polynomial
representations of the general linear group.  One can define $\rho[\pi]:=\rho\circ \pi$
to be the {composition} of these representations (cf.\ \cite[$\S$I.8,App.~I.A]{Mac95}
and \cite[$\S$5.4]{JK81}).
At the level of characters, the composition above corresponds to \emph{plethysm}
of symmetric functions, and gives \defn{plethysm coefficients}:
$$
f\ts [\ts g\ts ] \, = \, \sum_{\la} \. a^\la_{\mu\nu} \. s_\la \quad \text{where} \ \ \ f,g\in \La\ts,
$$
and \ts $\La$ \ts is the inverse limit of \ts $\La_n$ \ts in the category of
graded rings (see e.g.\ \cite[$\S$I.2]{Mac95}).
It was shown by Fischer and Ikenmeyer \cite[$\S$9]{FI20} that plethysm coefficients
for \ts $s_{\la}[s_\mu]$ \ts are \ts $\GapP$-complete, so in particular they are
in \ts $\GapP$, see also~$\S$\ref{ss:finrem-hist}.  The following result is a
generalization to other bases listed in Theorem~\ref{t:main-classic}.

\begin{thm}[{\rm plethysm coefficients}{}]\label{t:main-plethysm}
Let \ts $\{f_\la\}$ \ts and \ts $\{g_\la\}$ \ts be families of
symmetric polynomials from the following list of linear bases$:$
$$
\{s_\la\}, \ \ \{m_\la\}, \ \ \{p_\la\}, \ \ \{e_\la\}, \ \ \{h_\la\}\ts.
$$
Then the corresponding plethysm coefficients \ts $\{a^\la_{\mu\nu}\}$ \ts are in $\GapP$.
\end{thm}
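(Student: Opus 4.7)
The plan is to reduce each case to the Fischer--Ikenmeyer theorem on Schur-Schur plethysm by exploiting the Hopf algebra structure of $\La$ and the fact that each of the five listed bases expands in the Schur basis (and vice versa) via transition matrices whose entries lie in $\GapP$. Specifically, Kostka numbers $K_{\la\mu}$ are in $\SP$ (counted by SSYT), inverse Kostka numbers are in $\GapP$ (by E\u{g}ecio\u{g}lu--Remmel), and character values $\chi^\la(\mu)$ are in $\GapP$ (by Murnaghan--Nakayama), together handling the conversions for $h_\la$, $e_\la$, $m_\la$, and $p_\la$.

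Writing $f_\mu = \sum_\sigma U_{\mu\sigma}\ts s_\sigma$ and $g_\nu = \sum_\rho V_{\nu\rho}\ts s_\rho$ with $U,V \in \GapP$, linearity of plethysm in the first argument gives
$$
f_\mu[g_\nu] \ = \ \sum_\sigma U_{\mu\sigma}\ts s_\sigma[g_\nu],
$$
so it suffices to show that $s_\sigma[g_\nu]$ has Schur coefficients in $\GapP$. For this, split $V = V^+ - V^-$ with $V^\pm \in \SP$, so that $g_\nu = G^+ - G^-$ with each $G^\pm$ an $\nn$-linear combination of Schur functions. Combining the $\omega$-involution identity $s_\sigma[-X] = (-1)^{|\sigma|}\ts s_{\sigma'}[X]$ with the plethystic addition formula
$$
s_\sigma[X+Y] \ = \ \sum_{\al,\be}\ts c^\sigma_{\al\be}\ts s_\al[X]\ts s_\be[Y],
$$
a consequence of the Schur coproduct on $\La$, reduces the problem to computing $s_\al[G^+]$ and $s_{\be'}[G^-]$ for partitions $\al,\be$.

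Viewing $G^+ = \sum_\rho V^+_{\nu\rho}\ts s_\rho$ as a multiset $\{s_{\rho_i}\}_{i=1}^N$ of Schur functions indexed by the witnesses of $V^+ \in \SP$, iterating the addition formula yields
$$
s_\al[G^+] \ = \ \sum_{(\tau_1,\ldots,\tau_N)}\ts c^\al_{\tau_1 \cdots \tau_N}\ts\prod_{i=1}^N\ts s_{\tau_i}[s_{\rho_i}],
$$
where the iterated Littlewood--Richardson coefficient $c^\al_{\tau_1\cdots\tau_N} = [s_\al]\ts s_{\tau_1} \cdots s_{\tau_N}$ lies in $\SP$, and each inner factor $s_{\tau_i}[s_{\rho_i}]$ has Schur coefficients in $\GapP$ by Fischer--Ikenmeyer. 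Extracting the coefficient of $s_\la$ from the resulting product of Schur expansions invokes one further iterated LR rule.

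The principal obstacle is verifying $\GapP$ membership in the face of the (potentially exponentially large) cardinality $N$ of the multiset $G^+$. The key observation is that any witness for a summand in the iterated expansion need only record the nontrivial $\tau_i$ (of which there are at most $|\al|$, since $\sum_i |\tau_i| = |\al|$) together with their indices in $[N]$; since $N$ is bounded by $2^{\rm poly}$ as the value of an $\SP$ function, each index costs $O(\log N)$ bits and the total witness is polynomial in the unary input size. Closure of $\GapP$ under polynomial-length products and polynomially-witnessed signed summation then completes the argument.
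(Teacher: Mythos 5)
Your argument is correct in substance, but it takes a genuinely different route from the paper. The paper never passes through the Schur--Schur case: it exploits the fact that all five bases are \emph{monomial}-positive, so that \ts $g_\mu$ \ts is a genuine (nonnegative) alphabet of monomials \ts $\bx^{\tau},\bx^{\tau'},\ldots$, \ts and \ts $f_\la[g_\mu]$ \ts is literally the substitution \ts $f_\la(\bx^{\tau},\bx^{\tau'},\ldots)$. Expanding \ts $f_\la$ \ts in the monomial basis (coefficients in \ts $\SP$ \ts for each of the five bases) then gives the monomial expansion of \ts $f_\la[g_\mu]$ \ts with coefficients in \ts $\SP$ \ts directly, and a single application of the inverse Kostka numbers (in \ts $\GapP$ \ts by the effective M\"obius inversion) converts to the Schur basis. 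This is short and self-contained --- the paper presents it as streamlining and extending the Fischer--Ikenmeyer argument rather than invoking it --- and it avoids plethystic calculus entirely. Your reduction instead uses Fischer--Ikenmeyer as a black box together with the coproduct formula \ts $s_\sigma[X+Y]=\sum_{\al,\be} c^\sigma_{\al\be}\ts s_\al[X]\ts s_\be[Y]$ \ts and the negation rule \ts $s_\sigma[-X]=(-1)^{|\sigma|}s_{\sigma'}[X]$. What this buys is generality: it applies to any pair of bases whose transition matrices to and from the Schur basis lie in \ts $\GapP$, with no monomial-positivity hypothesis on the inner function. The price is reliance on the nontrivial Schur--Schur case and a longer chain of closure arguments.

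One step needs tightening. Indexing the copies of \ts $s_{\rho_i}$ \ts in \ts $G^+$ \ts by integers \ts $i\in[N]$ \ts does not immediately give an \ts $\NP$-verifiable witness: checking \ts $i\le N$ \ts and recovering \ts $\rho_i$ \ts from \ts $i$ \ts would require evaluating the \ts $\SP$ \ts function \ts $V^+$, which is not available to a polynomial-time verifier. The standard repair is to index each copy by a pair \ts $(\rho,u)$ \ts where \ts $u$ \ts is an accepting witness for \ts $V^+_{\nu\rho}$, and to require the at most \ts $|\al|$ \ts recorded pairs to be pairwise distinct; with that convention your closure argument goes through.
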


\smallskip
\subsection{Background and motivation} \label{ss:intro-back}
Structure constants are fundamental in algebraic combinatorics, reflecting
both the advances and the challenges posed by the nature of symmetric objects.
They are a succinct trove of information reducing algebraic structures to
explicit combinatorial objects.

Having a combinatorial interpretation of numbers does more than put a
face to the name.  It reveals a combinatorial structure which in turn is
a shadow of a rich but non-quantitative geometric or algebraic structure.
Different combinatorial interpretations give different shadows, helping
to understand the big picture.

For example, on a technical level, standard Young tableaux are the leading
terms in a natural linear basis of irreducible $S_n$ modules, a simple
bookkeeping tool for a large data structure (cf.\ \cite{JK81,Sag01}).
But on a deeper level, they are a byproduct of the \emph{branching rule},
which in turn comes from $S_n$ having a long subgroup chain (cf.~\cite{OV96}).

It would be impossible to overstate the impact of combinatorial interpretations
in algebraic combinatorics and symmetric function theory, as they completely
permeate the area (see e.g.\ \cite{Mac95,Sta-EC}).  The
LR-coefficients alone have over 15 different combinatorial
interpretations (see \cite[$\S$11.4]{Pak-OPAC}), and are the subject
of hundreds of papers.

Additionally, having a combinatorial interpretation does
wonders for applications of all kinds.  For example, for the LR-coefficients,
these include the \emph{saturation theorem} \cite{KT99},  efficient algorithm for
positivity \ts $[c^\la_{\mu\nu}>^?0]$, see \cite{DM06,BI13}, and various lower and
upper bounds, see \cite{PPY19}.  Even when combinatorial interpretations
are known only in special cases, the remarkable applications follow.
For example, for the Kronecker coefficients, these include unimodality
\cite{PP13}, and $\NP$-hardness of the positivity $[g(\la,\mu,\nu)>^?0]$ proved
in \cite{IMW17}.

Naturally, a signed combinatorial interpretation is inherently less powerful
than the usual (unsigned) one.  And yet, this is usually the best known tool
to obtain any results at all.  For example, for the Kronecker coefficients,
the signed combinatorial interpretation mentioned above
gives both a fast algorithm to compute the numbers, see \cite{PP17}, and
a sharp upper bound in some cases, see~\cite{PP20}.  Of course,
lower bounds cannot be obtained this way, which is why finding a good lower
bound for the Kronecker coefficients remains a major open problem,
see e.g.\ \cite{Pan23}.

An interesting case study is the Murnaghan--Nakayama (MN) rule for the $S_n$
characters values $\chi^\la(\mu)$,  defined as the signed sum over certain
rim hook (ribbon) tableaux (see e.g.\ \cite{JK81,Sag01}).  The rule was used in
\cite{LS08,Roi96} to obtain upper bounds for character values, which in turn
give upper bounds for the mixing times of random walks on $S_n$ generated
by conjugacy classes.

In a surprising development, when the conjugacy class $\mu$ is a rectangle,
it is known that all rim hook tableaux given by the MN rule have the same sign,
see \cite{SW85}.
This observation led to rich combinatorial developments, including combinatorial
proofs of character orthogonality \cite{Whi83,Whi85}, applications
in probability \cite{Bor99}, tilings \cite{Pak-ribbon}, and the LLT polynomials
describing representations of Hecke algebra at roots of unity \cite{LLT97}
(cf.\ Remark~\ref{rem:LLT}).

In a different direction, a signed combinatorial interpretation coming from
the \emph{Frobenius formula} (that is somewhat different from the MN rule),
was used to show that deciding positivity \ts $[\chi^\la(\mu)>0]$ \ts
of the character value is $\PH$-hard \cite[Thm~1.1.5]{IPP22}.  Moreover, the authors
show that character absolute value has no combinatorial interpretation
\cite[Thm~1.1.3]{IPP22}.  More precisely, \ts $|\chi^\la(\mu)|$ \ts is not in~$\SP$
unless \ts $\Sigmap_2=\PH$.  In other words, one should not expect the (usual)
combinatorial interpretation for the character values, unless one believes that
the polynomial hierarchy collapses.

In a warning to the reader, we should emphasize that some natural
combinatorial formulas defining the
numbers above are not, in fact, \ts $\GapP$ \ts formulas.  For example, the
definition of Kronecker coefficients gives:
$$
g(\la,\mu,\nu) \. := \. \<\chi^\la,\chi^\mu \cdot \chi^\nu\> \. = \.
\frac{1}{n!} \.\sum_{\si\in S_n} \chi^\la(\si) \. \chi^\mu(\si)\. \chi^\nu(\si)\ts,
$$
for all \ts $\la,\mu,\nu\vdash n$.  This only shows that \ts $\{g(\la,\mu,\nu)\}$ \ts
are in \ts $\GapFP$.

Indeed, while the summation above is in \ts $\GapP$ \ts via the MN~rule,
the division by~$n!$ is not allowed in~$\GapP$.  The same issue appears also
when applying \emph{Billey's formula} to compute Schubert coefficients \cite{Bil99},
the \emph{F\'{e}ray--\'{S}niady formula} \cite[Thm~4]{FS11} for the characters,
and Hurwitz's original formula for the \emph{double Hurwitz numbers},
see \cite{GJV05} and further references in \cite[$\S$8.5]{Pak-OPAC}.
All these formulas involve divisions, thus they only prove that the
corresponding integral functions are in \ts $\GapFP$.

We should mention that there are cases when the integrality was established
algebraically.  For example, the integrality of \ts $S_n$ \ts character values:
\ts $\chi^\la(\mu)\in \zz$, follows from the fact that $\si$ and $\si^a$
are conjugate, for all \ts $\si \in S_n$ \ts and \ts $(a,\ord(\si))=1$,
see e.g.\ \cite[$\S$13.1]{Ser77}.  The proof is based on a Galois theoretic
argument combined with a calculation of cyclotomic polynomials, and cannot
be easily translated to a $\GapP$ formula.  In other words, being in $\GapP$
can be a significantly stronger result of independent interest.

Finally, we note that the literature on structure constants discussed above
is much too large to be reviewed in this paper.  We include some additional
references in Section~\ref{s:finrem}.  Our general point stands: we obtain
signed combinatorial interpretations in many cases where none is known, and
new simple signed combinatorial interpretations in a handful of cases where
some are known.

\smallskip

\subsection{Proof ideas} \label{ss:intro-proof}
We start by observing that all structure constants in Theorems~\ref{t:main-quasi}
and~\ref{t:main-schubert} (excluding $\{\Psi_\al,\Phi_\al,\mathfrak{p}_{\al}\}$) are known to be integral.  Furthermore, the proofs that
these are integral involve \emph{unitriangular} \ts changes of bases.
This is best illustrated by the \emph{Kostka matrix} \ts
$(K_{\la\mu})$, which satisfies:
$$
K_{\la\la}\. = \. 1 \quad \text{and} \quad K_{\la\mu}=0 \ \ \text{unless} \ \ \la  \unlhd \mu\ts.
$$

From there, the problem is reduced to making the \defng{M\"obius inversion} \ts effective.
This, in turn, hinges on the fact that partial orders such as ``$\lhd$'' have polynomial
height (cf.~$\S$\ref{ss:finrem-inv}).  In the case of Kostka numbers, this gives an alternative signed combinatorial
interpretation of the inverse Kostka numbers \ts $K_{\la\mu}^{-1}$ \ts that is
different from that in \cite{ER90} and \cite{Duan03}.

Now, for the bases in Theorems~\ref{t:main-quasi} and~\ref{t:main-schubert},
the heavy lifting of establishing the unitriangular property was done
in a series of previous papers, some of them very recent.  Our approach gives
a simple to use tool to extend the unitriangular property to a $\GapP$ result
for the structure constants. This is done in Section~\ref{s:basic}.
In later Sections~\ref{s:sym}--\ref{s:poly}, we derive the results one by one.

Finally, we prove Theorem~\ref{t:main-plethysm} on plethysm coefficients in a short
Section~\ref{s:plethysm}.  The proof uses a simple argument again involving
\ts $\GapP$ \ts formulas for (generalized) Kostka numbers.

\medskip

\section{Basic definitions and notations} \label{s:notation}

We use $\nn=\{0,1,2,\ldots\}$ and $[n]=\{1,\ldots,n\}$.
To simplify the notation,
for a set $X$ and an element $x\in X$, we write \ts $X-x:=X \sm \{x\}$.
Similarly, we write \ts $X+y:=X\cup \{y\}$.

Let \ts $\cP=(X,\prec)$ \ts be a poset on the ground set~$X$ with a partial
order~``$\prec$''. Function \ts $h: X^2\to \rr$ \ts is called
\defn{triangular} \ts w.r.t.~$\cP$ \ts if \ts $h(x,y) = 0$ \ts unless
\ts $x\preccurlyeq y$ \ts for all \ts $x,y\in X$,
and \ts $h(x,x)\ne 0$ \ts for all \ts $x\in X$.
Similarly, function \ts $h: X^2\to \rr$ \ts is called
\defn{unitriangular} \ts w.r.t.~$\cP$ \ts if it is triangular
and \ts $h(x,x)=1$ \ts for all \ts $x\in X$.

Fix~$n$.  An \defn{integer partition} \ts $\la$ of $k$, denoted \ts $\la \vdash k$,
is a sequence of weakly decreasing nonnegative integers \ts
$(\la_1,\ldots,\la_n)$ \ts which sum up to~$k$.  Denote by \ts $U_{n,k}$ \ts the
set of these partitions, and let \ts $\rU_n = \cup_k \ts \rU_{n,k}$\ts.
Similarly a \defn{composition} (sometimes called \emph{weak composition})
\ts $\al$ of $k$, denoted $\al \vDash k$,
is a sequence of nonnegative integers \ts
$(\al_1,\ldots,\al_n)$ \ts which sum up to~$k$.
Let \.  $\vrV_{n,k} \ts :=\{\al\in \nn^n \, : \, \al \vDash k\}$, and \ts let \ts
$\vrV_{n}:=\cup_k \vrV_{n,k}$ \ts be sets of compositions. A \defn{strong composition}
\ts $\al \vDash k$ has all parts strictly positive.
Let \.  $\ComW_{n,k} \ts :=\{\al\in \nn^m_{\geq 1} \, : \, \al \vDash k, \ts  m\leq n\}$, and \ts let \ts
$\ComW_{n}:=\cup_k \ComW_{n,k}$ \ts be sets of strong compositions.
Clearly, every partition of $n$ is also a composition.  Let \ts
$D(\al) : =\{(i,j) \. : \. i\leq \al_j\}$ \ts denote the \defn{diagram} of~$\al$.

We write \ts $|\al| := \al_1+\ldots + \al_n$ \ts the size of the composition,
and \ts $\ell(\al)$ \ts the number of parts in~$\al$.
Denote \ts
$z_\al:=1^{m_1} \ts m_1! \.  2^{m_2} \ts m_2! \. \cdots$, where \ts $m_i=m_i(\al)$ \ts
denotes the multiplicity of~$i$ in~$\al$.
For two compositions \ts $\al,\be\vDash k$, the \defn{dominance order} \ts
is defined as follows:
$$\al\unlhd \be \ \ \Longleftrightarrow \ \
\al_1 \ts + \ts \ldots \ts + \ts \al_i \ts \ge \ts \be_1 \ts + \ts \ldots \ts + \ts \be_i  \ \ \text{for all} \ i.
$$
Note that compositions of different integers are incomparable.

For a permutation \ts $\si\in S_n$\ts, the \defn{Lehmer code} \ts is
a sequence \ts $\bc=(c_1,\ldots,c_n)\in \nn^n$ \ts given by \ts
$c_i(\si) := \#\{j > i : \si_i \ge \si_j\}$.  Note that \ts $|\bc| = \inv(\si)$
\ts is the \defn{number of inversions} in~$\si$.
Denote by \ts $S_{\infty}$ \ts the set of bijections \ts $w:\nn_{\ge 1} \to \nn_{\ge 1}$ \ts
which eventually stabilize: \ts $w(m)=m$ \ts for $m$ large enough.  We view such \ts $w$ \ts
as an infinite word, and note that the number of inversions is well defined on~$S_\infty$\ts.

\defn{Young diagram} \ts of shape \ts $\la$, denoted \ts $[\la]$,
is a set of squares \ts $\{(i,j) \.: \. 1\le i \le \ell(\la), \. 1 \le j \le \la_i\}$.
A \defn{semistandard Young tableau} \ts of shape~$\la$ is a
map \ts $A: [\la] \to \nn$, which is weakly increasing in rows: \ts $A(i,j) \le A(i,j+1)$ \ts
and strictly increasing in columns: \ts $A(i,j) < A(i+1,j)$.  The \defn{content} of~$A$
is a sequence \ts $(m_1,m_2,\ldots)$, where $m_k$ is the number of $k$ in the multiset~$\{A(i,j)\}$.
Let \ts $\SSYT(\la,\mu)$ \ts denote the set of semistandard Young tableaux of shape $\la$
and content~$\mu$.

Consider the polynomial ring $\cc[x_1,\ldots,x_n]$ in $n$ variables.  For an integer sequence
\ts $\ba = (a_1,\ldots,a_n)\in \nn^n$, denote \ts $\bx^\ba:= x_1^{a_1}\cdots x_n^{a_n}$.
We use \ts $[\bx^\ba] \ts F$ \ts to denote the coefficient of \ts $\bx^\ba$ \ts in the
polynomial~$F$.
A polynomial \ts $F\in \cc[x_1,\ldots,x_n]$ \ts is \defn{symmetric} \ts if its coefficients
satisfy
$$
[x_1^{a_1}\cdots x_n^{a_n}] \ts F \. = \.  [x_{\si(1)}^{a_1}\cdots x_{\si(n)}^{a_n}] \ts F \quad
\text{for all} \ \ \si \in S_n \ \ \text{and} \ \ (a_1,\ldots,a_n) \in \nn^n.
$$
The ring of symmetric polynomials is denoted by \ts $\La_n$.   Similarly, $F$ is \defn{quasisymmetric} \ts if
$$
[x_1^{a_1}\cdots x_k^{a_k}] \ts F \. = \.  [x_{i_1}^{a_1}\cdots x_{i_k}^{a_n}] \ts F \quad \text{for all} \ \ i_1 < \ldots < i_k\,,
\ \ 1\le k \le n \ \ \text{and} \ \ (a_1,\ldots,a_k) \in \nn_{\ge 1}^k\..
$$
The ring of quasisymmetric polynomials is denoted by \ts $\QSym_n$.  By definition, we have:
$$\La_n \. \ssu \. \QSym_n \. \ssu \. \cc[x_1,\ldots,x_n].
$$

Class \ts $\FP$ \ts is a class of functions computable in poly-time.
Class \ts $\SP$ \ts is closed under addition and multiplication:
$$
f,\ts g\in \SP \ \ \Longrightarrow \ \ f\ts + \ts g\., \ f\cdot g \in \SP\ts.
$$
Class \ts $\GapP:= \SP\ts - \ts \SP$ \ts is closed under addition, subtraction
and multiplication:
$$
f,\ts g\in \GapP \ \ \Longrightarrow \ \ f\ts \pm \ts g\., \ f\cdot g \in \GapP\ts.
$$
Class \ts $\GapFP$ \ts is a class of rational functions which can be written as \ts
$f/g$ \ts where $f\in \GapP$ \ts and \ts $g\in \FP$.  Clearly,
$$
\SP \ \ssu \ \GapP \ \ssu \ \GapFP\ts.
$$


\medskip

\section{Effective M\"obius inversion} \label{s:basic}

Let \. $X =\cup X_n$, where \. $X_n \subseteq \{0,1\}^n$, be a family of combinatorial
objects. Let \. $\cP:=(X,\prec)$ \.  be a poset such that \ts $x \prec y$ \ts only if
\ts $x,y\in X_n$ \ts for some~$n$.  We use \ts $\cP_n=(X_n,\prec)$ \ts to denote a subposet of~$\cP$.
The \defn{height} \ts of a poset $\cQ$, denoted \ts $\height(\cQ)$, is the size of the maximal chain in~$\cQ_n$\ts.  We say that $\cP$ has \defn{polynomial height},
if \ts $\height(\cP_n)\le C \ts n^c$, for some fixed \ts $C, c>0$.

Let \ts $\de: X^2\to \{0,1\}$ \ts be the \defn{delta function} \ts defined as \ts
$\de(x,y) =1$ \ts if \ts $x=y$, and \ts $\de(x,y) =0$ \ts otherwise.
Let \ts $\xi: X^2\to \{0,1\}$ \ts be the \defn{incidence function} \ts
defined as \ts $\xi(x,y) =1$ \ts if \ts $x\preccurlyeq y$ \ts and \ts
$\xi(x,y) =0$ \ts otherwise.  We say that $\xi$ is \defn{poly-time computable}, if for all \ts
$x,y\in X_n$ \ts the decision problem \ts $[x\preccurlyeq ^? y]$ \ts can be decided in \ts $O(n^c)$ \ts
time, for some fixed \ts $c>0$.

The \defn{M\"obius inverse} \ts is a function \ts $\mu(x,y): X^2 \to \zz$, such that
$$
\sum_{z\in X_n} \. \xi(x,z) \cdot \mu(z,y) \, = \, \de(x,y) \quad
\text{for all} \quad x,y\in X_n\..
$$

\begin{prop} \label{p:mu}
Let \. $\cP:=(X,\prec)$ \.  be a poset with polynomial height, and
suppose that the incidence function~$\xi$ is poly-time computable.
Then the M\"obius inverse function \. $\mu$ \. is in \ts $\GapP$.
\end{prop}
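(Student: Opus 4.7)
\medskip

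The plan is to invoke Philip Hall's classical formula for the M\"obius function of a poset and then observe that polynomial height turns it directly into a $\GapP$ expression. Recall Hall's identity: for all $x \preccurlyeq y$ in $\cP$,
$$
\mu(x,y) \, = \, \sum_{k \ge 0} \. (-1)^k \. c_k(x,y)\ts,
$$
where $c_k(x,y)$ is the number of strict chains $x = z_0 \prec z_1 \prec \cdots \prec z_k = y$ of length exactly~$k$. For $x \not\preccurlyeq y$ we set $\mu(x,y) = 0$, which is consistent because no such chain exists. This formula is the crucial bridge: it replaces the recursive defining relation of $\mu$ with an explicit signed enumeration of combinatorial objects (chains), which is exactly the format needed for a $\GapP$ membership proof.

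Next, I would exploit the polynomial height hypothesis. Since every strict chain in $\cP_n$ has length at most $\height(\cP_n) \le Cn^c$, the sum above is finite with only polynomially many nonzero terms, and any chain witness has total encoding size $O(n \cdot \height(\cP_n)) = O(n^{c+1})$, which is polynomial in~$n$. Split $\mu = g - h$ where
$$
g(x,y) \, := \sum_{k \text{ even}} \. c_k(x,y), \qquad h(x,y) \, := \sum_{k \text{ odd}} \. c_k(x,y)\ts.
$$
I claim both $g,h \in \SP$. Indeed, an $\NP$-machine computing $g(x,y)$ on input $(x,y)$ guesses an even integer $k \le \height(\cP_n)$ together with a tuple $(z_0, z_1, \ldots, z_k)$ of elements of $X_n$, and accepts iff $z_0 = x$, $z_k = y$, and $z_{i-1} \prec z_i$ for each $i \in [k]$. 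Each of the latter checks reduces to two evaluations of the incidence function $\xi$ (to test $z_{i-1} \preccurlyeq z_i$ and to rule out $z_i \preccurlyeq z_{i-1}$, thereby certifying strict inequality), each of which runs in polynomial time by hypothesis. Thus the verifier is poly-time, and the number of accepting witnesses equals $g(x,y)$. The same construction with ``$k$ odd'' yields $h \in \SP$.

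Finally, $\mu = g - h$ with $g,h \in \SP$, so $\mu \in \SP - \SP = \GapP$, as claimed. There is no real technical obstacle here: the work has already been done by Hall's theorem, and the complexity-theoretic content is the purely formal observation that polynomially bounded chain length allows an $\NP$-verifier to read the whole chain. The only points requiring any care are (i) correctly encoding strictness via two $\xi$-queries so that the verifier stays polynomial without relying on an equality test in $X_n$ beyond comparing bit strings, and (ii) keeping separate machines for the even and odd parities so that the two $\SP$ functions $g,h$ are well defined and one does not try to encode signs inside a single $\SP$ computation.
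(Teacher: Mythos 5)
Your proposal is correct and follows essentially the same route as the paper's proof: Hall's theorem expressing $\mu(x,y)$ as an alternating sum of chain counts, the polynomial height bound guaranteeing that chain witnesses have polynomial size, and the even/odd split $\mu = \mu_+ - \mu_-$ with $\mu_\pm \in \SP$. The extra care you take in spelling out the $\NP$ verifier (two $\xi$-queries to certify strictness) is a detail the paper leaves implicit, but it changes nothing of substance.
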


\begin{proof}
For all \. $x,y\in X$, denote by \ts $\cP_{xy}$ \ts the interval in~$\cP$,
and let \ts $h:= \height \big(\cP_{xy}\big)$.
Denote by \ts $\cC_{\ell}(x,y)$ \ts the set of chains \. $x\to z_1 \to \ldots \to z_{\ell-1} \to y$ \.
in \ts $\cP_{xy}$ \ts  of length~$\ell$.
By \emph{Hall's theorem} (see e.g.\ \cite[Prop.~3.8.5]{Sta-EC}), we have:
$$
\mu(x,y) \, = \, \sum_{\ell=0}^h \. (-1)^\ell \. \big|\cC_\ell(x,y)\big|\ts.
$$
We conclude: \. $\mu(x,y) \, = \, \mu_+(x,y) \. - \. \mu_-(x,y)$, where
$$\mu_+(x,y) := \sum_{i=0}^{\lfloor h/2\rfloor} \. \big|\cC_{2i}(x,y)\big|
\quad \text{and} \quad
\mu_-(x,y) := \sum_{i=0}^{\lfloor h/2\rfloor} \. \big|\cC_{2i+1}(x,y)\big|.
$$
Since \ts $h$ \ts is polynomial, we have \ts $\mu_\pm \in \SP$ \ts by definition.
This completes the proof.
\end{proof}

Let \ts $\eta: X^2 \to \zz$ \ts be unitriangular w.r.t.\ $\cP$, i.e.\  \ts $\eta(x,x)=1$ \ts for all \ts
$x\in X$, and \ts $\eta(x,y)\ne 0$ \ts implies \ts $x\preccurlyeq y$, \ts $x,y\in X_n$ \ts
for some~$n$.
The \defn{inverse} \ts of~$\eta$ (in the incidence algebra),
is a function \ts $\rho(x,y): X^2 \to \zz$, such that
$$
\sum_{z\in X_n} \. \eta(x,z) \cdot \rho(z,y) \, = \, \de(x,y) \quad
\text{for all} \quad x,y\in X_n\..
$$

\begin{prop} \label{p:rho}
Let \. $\cP:=(X,\prec)$ \.  be a poset with polynomial height, and
suppose that the incidence function~$\xi$ is poly-time computable.
Suppose function \ts $\eta$ \ts is in \ts $\GapP$.  Then the inverse
function \ts of~$\eta$ \ts is also in \ts $\GapP$.
\end{prop}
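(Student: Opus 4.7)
The plan is to mirror the proof of Proposition~\ref{p:mu}, replacing M\"obius inversion with inversion of~$\eta$ in the incidence algebra via the Neumann series. Since $\eta$ is unitriangular, I would decompose $\eta = I + N$, where $N(x,y) := \eta(x,y)$ whenever $x \prec y$ strictly and $N(x,y) := 0$ otherwise. Because $\cP$ has polynomial height $h := \height(\cP_n)$, the operator $N$ acts nilpotently on $\cP_n$ with $N^{h+1} \equiv 0$, so
\[
\rho \, = \, \eta^{-1} \, = \, (I+N)^{-1} \, = \, \sum_{\ell=0}^{h} \. (-1)^\ell \. N^\ell.
\]
Expanding in the natural basis yields
\[
\rho(x,y) \, = \, \sum_{\ell=0}^{h} (-1)^\ell \. \sum_{C \, \in \, \cC_\ell(x,y)} \ \prod_{i=1}^{\ell} \eta(z_{i-1},z_i),
\]
where $\cC_\ell(x,y)$ is the set of strict chains $C : x = z_0 \prec z_1 \prec \cdots \prec z_\ell = y$, exactly as in Proposition~\ref{p:mu} but now carrying a $\GapP$ weight rather than the trivial weight~$1$.

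Next, I would argue that this expression defines a $\GapP$ function of~$(x,y)$. Each factor $\eta(z_{i-1}, z_i)$ is in $\GapP$ by hypothesis. Since $\GapP$ is closed under polynomially-bounded products (a standard fact; see e.g.\ \cite{For97,HO02}) and $\ell \le h$ is polynomial, the chain weight $\prod_i \eta(z_{i-1},z_i)$ is a $\GapP$ function of the augmented input~$(x,y,C)$. Whether a sequence $(z_0, \ldots, z_\ell)$ is a valid strict chain from~$x$ to~$y$ can be verified in polynomial time because $\xi$ is poly-time computable, so the inner sum over $\cC_\ell(x,y)$ lies in $\GapP$ by closure under polynomially-verifiable summation. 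The outer alternating sum over $\ell \in \{0,\ldots,h\}$ preserves $\GapP$ since this class is closed under addition and negation, completing the argument.

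The main technical point will be the closure of $\GapP$ under a polynomial-length product of factors that may take arbitrary integer values. I would invoke the Turing-machine characterization $\GapP = \{\mathrm{acc}_M - \mathrm{rej}_M\}$: if $\eta$ is computed by a nondeterministic polynomial-time machine $M$ with $\eta = \mathrm{acc}_M - \mathrm{rej}_M$, then $\prod_{i=1}^\ell \eta(z_{i-1},z_i)$ is realized by a machine that simulates $M$ sequentially on the $\ell$ pairs $(z_{i-1},z_i)$ and uses the parity of rejecting simulations to decide whether the resulting computation path contributes $+1$ or $-1$. This construction stays polynomial-time precisely because $\ell \le h$ is polynomial, which is exactly where the polynomial-height hypothesis on~$\cP$ enters (in addition to bounding the number of terms in the outer alternating sum). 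Everything else reduces to routine incidence-algebra bookkeeping, giving a clean generalization of Proposition~\ref{p:mu}.
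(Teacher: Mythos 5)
Your proposal is correct and follows essentially the same route as the paper: the paper's proof is precisely the chain expansion $\rho(x,y)=\sum_{\ell=0}^{h}(-1)^{\ell}\sum_{C\in\cC_\ell(x,y)}\eta(x,z_1)\cdots\eta(z_{\ell-1},y)$, stated as the analogue of the Hall-type formula in Proposition~\ref{p:mu}, followed by ``the result follows.'' Your additional justification of the \GapP{} closure under polynomially many factors and exponential sums over poly-time verifiable chains simply makes explicit what the paper leaves to the reader.
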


\begin{proof}
Denote by \ts $\rho$ \ts the inverse function as in the theorem.
We similarly have:
$$
\rho(x,y) \, = \, \sum_{\ell=0}^h \. \sum_{(x\to z_1 \to \ldots \to z_{\ell-1} \to y) \in \cC_\ell(x,y)} \.
(-1)^\ell \, \eta(x,z_1) \cdot \eta(z_1,z_2) \. \cdots \. \eta(z_{\ell-1},y).
$$
The result follows.
\end{proof}

\medskip

\section{Symmetric polynomials}\label{s:sym}
Let \ts $\La_m=\cc[x_1,\ldots,x_n]^{S_n}$ \ts be the ring of symmetric polynomials.
Denote by \ts $\cq_m$ \ts the poset on partitions \ts $\la \vdash m$ \ts with
dominance order \. $\la \unlhd \mu$, for all \ts $\la,\mu \vdash m$.   It is known
that the dominance order is a lattice, see~$\S$\ref{ss:musings-dom}.
Clearly, \. $\height(\cq_m)=O(m^2)$.  In fact, it was shown in \cite{GK86}, that
\. $\height(\cq_m) = \Theta(m^{3/2})$.

\smallskip

\subsection{Standard bases} \label{ss:sym-standard}
\defn{Monomial symmetric polynomials} \. $\{m_\la\.:\.\la \in \rU_n\}$ \ts are defined as
\[
m_\la(x_1,\ldots,x_n) \. := \. \sum_w \. \bx^{w(\la)},
\]
where the summation is over all \ts $w\in S_n$ \ts giving different
reorderings \ts $w(\la)$ \ts of~$\la$.

\defn{Power sum symmetric polynomials} \. $\{p_\la\.:\.\la \in \rU_n\}$ \.
are defined by
$$
p_{\la}(x_1,\ldots,x_n) \, := \. p_{\la_1}(x_1,\ldots,x_n)\. \cdots \. p_{\la_n}(x_1,\ldots,x_n),
$$
where
\[
p_k(x_1,\ldots,x_n) \, := \, x_1^k \. + \. \ldots \. + \. x_n^k \..
\]
Observe that
$$p_{\la}(x_1,\ldots,x_n) \, = \, \sum_{\mu} \. P(\la,\mu) \. m_{\mu}(x_1,\ldots,x_n),
$$
where \ts $P(\la,\mu)$  \ts is the number of nonnegative integer-valued matrices
with column sum~ $\la$ and row sums~$\mu$,
where each column contains at most one nonzero entry.

Similarly, \defn{elementary symmetric polynomials} \. $\{e_\la\.:\.\la \in \rU_n\}$ \. are
are defined by \.
$$
e_{\la}(x_1,\ldots,x_n) \, := \, e_{\la_1}(x_1,\ldots,x_n) \. \cdots \. e_{\la_n}(x_1,\ldots,x_n),
$$ where
\[
e_k(x_1,x_2,\ldots,x_n) \, := \, \sum_{1\le i_1<\ldots<i_k\le n} \. x_{i_1} \. \cdots \. x_{i_k} \..
\]
Observe that
$$e_{\la}(x_1,\ldots,x_n) \, = \, \sum_{\mu} \. E(\la,\mu) \. m_{\mu}(x_1,\ldots,x_n),
$$
where \ts $E(\la,\mu)$ \ts is the number of $\{0,1\}$-matrices with column sums~$\la$ and row sums~$\mu$.

\defn{Complete homogeneous symmetric polynomials} \. $\{h_\la\.:\. \la \in \rU_n\}$ \. are defined by
$$h_{\la}(x_1,\ldots,x_n) \, := \, h_{\la_1}(x_1,\ldots,x_n)\. \cdots \. h_{\la_n}(x_1,\ldots,x_n),
$$
where
\[
h_k(x_1,x_2,\ldots,x_n) \, := \, \sum_{\mu\vdash k} \. m_\mu\..
\]
Observe that
$$h_{\la}(x_1,\ldots,x_n) \, = \, \sum_{\mu} \. H(\la,\mu) \. m_{\mu}(x_1,\ldots,x_n),
$$
where \ts $H(\la,\mu)$ \ts is the number of nonnegative integer-valued matrices
with column sums~$\la$ and row sums~$\mu$.

Finally, \defn{Schur polynomials} \. $\{s_\la\.:\. \la\in \rU_n\}$ \. can be defined as
\[
s_{\la}(x_1,\ldots,x_n) \, := \, \sum_{\mu} \. K_{\la\mu}\. m_{\mu}(x_1,\ldots,x_n),
\]
where the \defn{Kostka numbers} \ts $K_{\la\mu}$ \ts
compute the number of semistandard Young tableaux of shape~$\la$ and content~$\mu$.
The
\defn{Littlewood--Richardson coefficients} \ts $c^\la_{\mu\nu}$ \ts are
defined by
$$
s_\mu \cdot s_\nu \, = \, \sum_{\la} \. c^\la_{\mu\nu} \. s_\la\..
$$
Recall that \ts $\big\{c^\la_{\mu\nu}\big\}$ \ts are given as the number
of LR-tableaux (we omit the definition), a subset of semistandard
Young tableaux, see e.g.\ \cite{Mac95,Sta-EC}.

\smallskip

\subsection{Structure constants} \label{ss:sym-structure-const}
We start with a traditional approach to structure constants,
which we outline for completeness.

\begin{proof}[Proof of Theorem~\ref{t:main-classic}]
The result for \ts $\{p_\la\}$, \ts $\{e_\la\}$ \ts and \ts $\{h_\la\}$ \ts is trivial.
The definition of \ts $\{m_\la\}$ \ts gives their structure coefficients
\[
T(\la,\mu,\nu) \, := \, \#\big\{(u(\la),w(\mu)) \. : \. u(\la)+w(\mu)=\nu\big\},
\]
implying that they are in~$\SP$.  Finally, the LR-coefficients
\ts $\big\{c^\la_{\mu\nu}\big\}$ \ts are in \ts $\SP$ by the definition
of LR-tableaux.
\end{proof}

We now prove a weaker result, using the effective M\"obius inversion.

\begin{prop}\label{p:Kostka-LR}
The inverse Kostka numbers \ts $\{K^{-1}_{\la\mu}\}$ \ts and the
LR-coefficients \ts $\big\{c^\la_{\mu\nu}\big\}$ \ts are in \ts $\GapP$.
\end{prop}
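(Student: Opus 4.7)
The plan is to derive both claims from the effective M\"obius inversion in \Cref{p:rho}. For the inverse Kostka numbers, I would apply \Cref{p:rho} directly to $\eta := K$. The poset $\cq_m$ on partitions of $m$ under dominance order has polynomial (in fact $O(m^{3/2})$) height, and the relation $\mu \unlhd \la$ is decidable in polynomial time by comparing partial sums of the parts. The Kostka matrix is unitriangular with respect to (the opposite of) this order since $K_{\la\la}=1$ and $K_{\la\mu}=0$ unless $\mu \unlhd \la$, and $K_{\la\mu} \in \SP \subseteq \GapP$ as it counts $\SSYT(\la,\mu)$. \Cref{p:rho} then yields $K^{-1}_{\la\mu} \in \GapP$.

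For the Littlewood--Richardson coefficients, I would change bases from Schur to monomial and back. For partitions $\mu,\nu$ and any partition $\tau$ of $|\mu|+|\nu|$, set
$$
A(\mu,\nu,\tau) \. := \. [m_\tau]\bigl(s_\mu \. s_\nu\bigr) \. = \. \#\bigl\{(T_1,T_2) \in \SSYT(\mu,\rho) \times \SSYT(\nu,\sigma) \. : \. \rho+\sigma=\tau\bigr\},
$$
which lies in $\SP$ since the witnesses $(T_1,T_2)$ have polynomial size and the condition $\rho+\sigma=\tau$ is polynomial-time checkable. Inverting the expansion $s_\la = \sum_\mu K_{\la\mu} m_\mu$ gives $m_\tau = \sum_\la K^{-1}_{\tau\la}\ts s_\la$, and substitution yields
$$
c^\la_{\mu\nu} \. = \. \sum_\tau A(\mu,\nu,\tau) \. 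K^{-1}_{\tau\la}.
$$

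Writing $K^{-1} = P - Q$ with $P,Q \in \SP$ from the first step, we have
$$
c^\la_{\mu\nu} \. = \. \sum_\tau A(\mu,\nu,\tau)\ts P_{\tau\la} \. - \. \sum_\tau A(\mu,\nu,\tau)\ts Q_{\tau\la}.
$$
Although $\tau$ ranges over exponentially many partitions of $|\mu|+|\nu|$, each right-hand sum counts triples $(\tau, a, b)$ consisting of a polynomial-size partition $\tau$ together with $\SP$-witnesses $a$ for $A(\mu,\nu,\tau)$ and $b$ for $P_{\tau\la}$ (respectively $Q_{\tau\la}$), all verifiable in polynomial time; hence each sum is in $\SP$, and their difference is in $\GapP$.

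The one point to be careful about is precisely this last bookkeeping step: one must package the sum over the (exponentially many) intermediate partitions $\tau$ together with the witnesses for $A$ and $K^{-1}$ into a single NP machine, which uses the closure of $\SP$ under NP-verifiable disjoint unions. Everything else, including the reduction of the LR structure problem to inverting the Kostka matrix on the monomial side, is routine manipulation.
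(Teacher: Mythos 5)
Your proposal is correct and follows essentially the same route as the paper: apply Proposition~\ref{p:rho} to the unitriangular Kostka matrix over the dominance order (which has polynomial height and poly-time comparability) to get $K^{-1}_{\la\mu}\in\GapP$, then expand $s_\mu s_\nu$ in the monomial basis and convert back via $K^{-1}$. Your coefficient $A(\mu,\nu,\tau)$ is just the paper's composite $\sum K_{\mu\tau}K_{\nu\vk}T(\tau,\vk,\si)$ written as a single $\SP$ count, and the final bookkeeping over the intermediate partitions $\tau$ is the same closure-under-summation argument the paper uses implicitly.
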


\begin{proof}
Recall \. $\big(K_{\la\mu}\big)$ \. is unitriangular w.r.t.\ to the dominance order, so
$$
s_\la \, = \, \sum_{\mu \ts \unlhd \ts \la} \. K_{\la\mu} \. m_\mu\..
$$
Thus, the \defn{inverse Kostka numbers} \ts are given by
\begin{equation}\label{eq:inv-Kostka}
m_\la \, = \, \sum_{\mu \ts \unlhd \ts \la} \. K^{-1}_{\la\mu} \. s_\mu\..
\end{equation}
Now Proposition~\ref{p:rho} implies that \ts $\{K^{-1}_{\la\mu}\} \in \GapP$.
We have:
$$
\aligned
s_\mu \cdot s_\nu \, & = \, \Big(\sum_{\tau \unlhd \mu} \. K_{\mu\tau} \. m_\tau\Big)
\cdot  \Big(\sum_{\vk \ts \unlhd \ts  \nu} \. K_{\nu\vk} \. m_\vk\Big) \\
& = \,
\sum_{\tau \unlhd \mu} \. \sum_{\vk \ts \unlhd \ts  \nu} \. \sum_{\si} \.  K_{\mu\tau} \. K_{\nu\vk} \. T(\tau,\vk,\si) \cdot m_\si \\
& = \, \sum_{\tau \ts \unlhd \ts  \mu} \. \sum_{\vk \ts \unlhd \ts  \nu} \. \sum_{\si} \. \sum_{\la \ts \unlhd \ts  \si} \.
K_{\mu\tau} \. K_{\nu\vk} \. T(\tau,\vk,\si) \.  K^{-1}_{\si\la} \cdot s_\la
\endaligned
$$
Since \ts $\{K^{-1}_{\la\mu}\} \in \GapP$ \ts and \ts $\{T(\la,\mu,\nu)\} \in \SP$,
this implies that LR-coefficients are in~$\GapP$.
\end{proof}

\smallskip

\subsection{$(q,t)$ deformations} \label{ss:sym-qt}
Following \cite{Mac88}, \defn{Macdonald symmetric polynomials} \ts
$P_{\la}$ \ts can be defined in terms of semistandard Young tableaux:
\[
P_{\la}(\bx;q,t) \, := \, \sum_{\mu} \. m_{\mu}(\bx) \.
\sum_{T\ts \in \ts \SSYT(\la,\mu)} \. \psi_T(q,t)\ts,
\]
where \ts $\bx = (x_1,\ldots,x_n)$ \ts and
\ts $\psi_T(q,t)$ \ts is a explicit rational function given by
a product formula.  In particular, for fixed \ts $q,t\in \qqq$ \ts such that
\ts $0\le q, t < 1$, this function \ts $\psi: T \to \qqq$ \ts is in \ts $\FP/\FP$.

The \defn{Hall--Littlewood polynomials} \. $P_{\la}(\bx;t)$, see \cite{Lit61},
specialize Macdonald symmetric polynomials by taking \ts $q= 0$.
Similarly, the \defn{Jack symmetric polynomials} \. $P_{\la}(\bx;\al)$,
see \cite{Jac70}, specialize Macdonald symmetric polynomials in another direction:
$$
P_{\la}(\bx;\al) \. := \. \lim_{t\rightarrow 1}P_{\la}(\bx;t^\al,t).
$$

It follows from the definition above and the explicit form of \ts $\psi_T$ \ts
that Macdonald symmetric polynomials are unitriangular in the monomial
symmetric polynomials:
$$
P_{\la}(\bx;q,t)  \. = \. \sum_{\mu\ts \unlhd \ts \la} \. K_{\la\mu}(q,t) \. m_\mu(\bx),
$$
where \ts $K_{\la,\la}(q,t)=1$, see  \cite[Thm~2.3]{Mac95}.
Using the argument in the proof of Proposition~\ref{p:Kostka-LR} gives
the last part of Theorem~\ref{t:main-qt}.
By the specialization to Hall--Littlewood polynomials and Jack polynomials,
we obtain the remaining two parts of the theorem.  We omit the details.

\smallskip

\subsection{Schur $P$-polynomials}\label{ss:finrem-other}
For a partition $\la$ with distinct parts, the
\defn{Schur $P$-polynomial} \ts is given by
$$
P_{\la}(\bx) \, := \, P_{\la}(\bx;-1).
$$
This specialization of the Hall--Littlewood polynomials was defined by Schur
(1911) in the study of projective representation theory of $S_n$\ts.
They are also called \emph{Q-functions}, see \cite[$\S$III.8]{Mac95}.

Note that Schur $P$-polynomials span a subring
of \ts $\La_n\ts$.  The following result follows verbatim the proof
of Theorem~\ref{t:main-qt}.

\begin{cor}\label{cor:Schur-P}
Schur $P$-polynomials have structure constants in \ts $\GapP$.
\end{cor}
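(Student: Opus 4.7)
The plan is to follow the proof of Theorem~\ref{t:main-qt} almost verbatim, with the only novelty being an adaptation to the fact that \ts $\{P_\la(\bx)\.:\. \la \text{ strict},\. \ell(\la) \le n\}$ \ts spans a proper subring \ts $\Om_n \ssu \La_n$. The starting point, from \cite[$\S$III.8]{Mac95}, is the unitriangular expansion in monomial symmetric polynomials
$$
P_\la(\bx) \. = \. m_\la \. + \. \sum_{\mu \ts \lhd \ts \la} \. K^P_{\la\mu} \. m_\mu,
$$
where the coefficients \ts $K^P_{\la\mu}$ \ts count shifted marked tableaux and therefore lie in \ts $\SP$.

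First, I would use the \ts $\SP$-function \ts $T(\tau,\vk,\si)$ \ts from the proof of Theorem~\ref{t:main-classic} to expand the product of two Schur $P$-polynomials in the monomial basis:
$$
P_\mu \cdot P_\nu \. = \. \sum_\si \. c_\si \, m_\si\ts, \ \ \text{where}  \ \ c_\si \. := \. \sum_{\tau,\vk} \. K^P_{\mu\tau} \. K^P_{\nu\vk} \. T(\tau,\vk,\si) \. \in \. \SP.
$$
Since \ts $P_\mu \cdot P_\nu \in \Om_n$, it also equals \ts $\sum_{\la \text{ strict}} d^\la_{\mu\nu} \ts P_\la$, and equating these two monomial expansions at strict indices \ts $\tau$ \ts gives the triangular system
$$
c_\tau \. = \. d^\tau_{\mu\nu} \. + \. \sum_{\la \text{ strict}, \ts \la \ts \rhd \ts \tau} d^\la_{\mu\nu} \. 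K^P_{\la\tau}.
$$

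Although strict partitions are not downward-closed under dominance in \ts $\cq_m$, the sub-poset they form inherits polynomial height. I would therefore solve the system from the top by iterating the recursion, exactly as in the proof of Proposition~\ref{p:rho}. This unrolls \ts $d^\tau_{\mu\nu}$ \ts as an alternating sum over chains of strict partitions of polynomially bounded length, each term being a product of \ts $\SP$-functions. Splitting by parity of the chain length then presents \ts $d^\tau_{\mu\nu}$ \ts as a difference of two \ts $\SP$-functions, as required.

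The main obstacle I anticipate is essentially bookkeeping: verifying that the chain data can be encoded and checked in polynomial time, which reduces to the standard facts that \ts $K^P_{\la\mu}$ \ts and \ts $T(\tau,\vk,\si)$ \ts are in \ts $\SP$ \ts via their explicit tableau/matrix descriptions, and that dominance comparison is poly-time decidable. Once these are in place, no additional algebraic input beyond the unitriangular expansion of \ts $P_\la$ \ts should be needed.
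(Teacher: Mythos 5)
Your proposal is correct and follows essentially the same route as the paper, which derives the corollary verbatim from the proof of Theorem~\ref{t:main-qt}: unitriangular expansion in the monomial basis followed by the effective M\"obius inversion of Proposition~\ref{p:rho}. The one point you handle more explicitly than the paper is the restriction to the subposet of strict partitions (needed because the $P_\la$ span only a subring), and your observation that this subposet inherits polynomial height and that unitriangularity survives the restriction is exactly what makes the inversion go through.
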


\smallskip

\subsection{$(q,t)$ analogues} \label{ss:sym-qt-analogues}
In Theorem~\ref{t:main-qt}, we consider deformations of Schur polynomials,
viewed as bases in~$\La$.  One can also view $(q,t)$ as variables and
extend the results in this direction.  For the Hall--Littlewood polynomials
\ts $P_{\la}(\bx;t)\in \La[t]$, the corresponding
Kostka polynomials \ts $K_{\la,\mu}(t)\in \nn[t]$ \ts are the coefficients
of their expansion in Schur polynomials.  They have a known combinatorial
interpretation by Lascoux and Sch\"utzenberger (see e.g.\ \cite[$\S$III.6]{Mac95}).
Using the LR rule, we conclude:

\begin{prop}\label{prop:q-analogues}
Hall--Littlewood polynomials \ts $\{P_{\la}(t)\}$ \ts
have structure constants in \ts $\SP$.
\end{prop}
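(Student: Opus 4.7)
The plan is to mirror the proof of Proposition~\ref{p:Kostka-LR}, using two manifestly nonnegative combinatorial inputs: the Lascoux--Sch\"utzenberger charge formula $K_{\la\mu}(t)=\sum_{T\in\SSYT(\la,\mu)}t^{\mathrm{ch}(T)}$, which places each Kostka--Foulkes polynomial, viewed as a function of a unary integer $t\ge 0$, in $\SP$ by counting pairs $(T,\vec v)$ with $\vec v\in\{1,\ldots,t\}^{\mathrm{ch}(T)}$; and the LR rule $s_\mu\cdot s_\nu=\sum_\la c^\la_{\mu\nu} s_\la$, which places $\{c^\la_{\mu\nu}\}$ in $\SP$.

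First I would pass from the $P$-basis through the $s$-basis and back. The transition $s_\la=\sum_{\mu\ts\unlhd\ts\la}K_{\la\mu}(t)\.P_\mu$ is unitriangular with respect to dominance order, so applying Proposition~\ref{p:rho} coefficient-by-coefficient in $t$ gives a signed formula for the inverse transition coefficients $K^{-1}_{\la\mu}(t)\in\zz[t]$. Expanding
\[
P_\mu\cdot P_\nu \, = \, \sum_{\alpha,\beta,\gamma,\la} \. K^{-1}_{\mu\alpha}(t)\,K^{-1}_{\nu\beta}(t)\,c^\gamma_{\alpha\beta}\,K_{\gamma\la}(t)\,P_\la
\]
and reading off coefficients realizes each $f^\la_{\mu\nu}(t)$ as a $\GapFP$-formula in the unary-$t$ regime; this is the direct analogue of the Proposition~\ref{p:Kostka-LR} argument with $K$ replaced by $K(t)$.

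The main obstacle is upgrading the signed formula to an unsigned one. Here one invokes the classical type-$A$ positivity $f^\la_{\mu\nu}(t)\in\nn[t]$ of Hall polynomials (see e.g.\ \cite[$\S$III.3--5]{Mac95}), which follows from the count of short exact sequences of abelian $p$-groups when $t$ is specialized to a prime power, combined with polynomial interpolation (or, alternatively, via Schiffmann-type quiver-representation arguments). Positivity supplies a combinatorial formula $f^\la_{\mu\nu}(t)=\sum_X t^{\mathrm{stat}(X)}$ over LR-style tableaux $X$ with an explicit statistic; padding by an auxiliary vector $\vec v\in\{1,\ldots,t\}^{\mathrm{stat}(X)}$ then places $f^\la_{\mu\nu}(t)$ in $\SP$ for unary $t\ge 0$, exactly in parallel with the charge-padding trick for Kostka--Foulkes. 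The delicate step, and where the proof goes genuinely beyond the unitriangular framework of Section~\ref{s:basic}, is matching the signed Möbius-inverse formula above with this manifestly positive expansion; the cancellation is not formal and uses structure specific to Hall polynomials in type~$A$.
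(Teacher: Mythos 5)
Your proposal does not follow the paper's argument, and the step you yourself flag as ``delicate'' is a genuine gap rather than a technicality. The paper's proof is a one-line \emph{positive} argument with no M\"obius inversion anywhere: one takes the Hall--Littlewood basis normalized so that its Schur expansion coefficients are the Kostka--Foulkes polynomials $K_{\la\mu}(t)$, observes that these are in $\SP$ coefficient-by-coefficient via the Lascoux--Sch\"utzenberger charge statistic (the $\SP$ function is \ts $f(\la,\mu,k)=\#\{T\in\SSYT(\la,\mu)\.:\.\mathrm{charge}(T)=k\}$ \ts --- note that the proposition, as explicated in the sentence following it, concerns the coefficients of $t^k$, not evaluation at a unary integer $t\ge 0$ as in your padding construction), and then multiplies two such manifestly nonnegative expansions using the LR rule. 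Every coefficient produced is a nonnegative sum of products of $\SP$ quantities, so membership in $\SP$ is immediate; no inverse matrix, hence no sign, ever enters. Your detour through Proposition~\ref{p:rho} manufactures a signed formula that you must then un-sign, which is exactly the wrong direction for an $\SP$ statement.

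The un-signing step is where the proof breaks. First, the asserted ``classical positivity'' $f^\la_{\mu\nu}(t)\in\nn[t]$ of Hall polynomials does not follow from your argument: counting short exact sequences shows the \emph{values} at prime powers are nonnegative, and interpolation preserves values, not signs of coefficients (compare $t^2-t+1$, positive at every prime power). Coefficient-positivity of Hall polynomials is a genuinely delicate matter --- Macdonald's decomposition of $g^\la_{\mu\nu}$ as a sum over LR tableaux has summands with negative coefficients, and positivity has been established only in special cases --- so it cannot be cited as a known black box. Second, and more fundamentally, even granting nonnegativity of the coefficients, that would only place the function in $\GapPP$; inferring from positivity ``a combinatorial formula $\sum_X t^{\mathrm{stat}(X)}$,'' and hence membership in $\SP$, is precisely the inference this paper is built around refusing (cf.\ $\S$\ref{ss:intro-signed} and Remark~\ref{rem:Schubert-signs}: nonnegative $\GapP$ functions need not be in $\SP$ modulo standard complexity assumptions). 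Your closing concession that ``the cancellation is not formal'' is not a gloss on the proof; it \emph{is} the missing proof.
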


Here structure constants form a family of polynomials \ts $\big\{c^\la_{\mu\nu}(t)\in \nn[t]\big\}$.
The proposition states that there is a \ts $\SP$ \ts function \. $f: \{(\la,\mu,k)\} \to \nn$, such that
$$
c^\la_{\mu\nu}(t) \, = \, \sum_{k\in \nn} \. f(\la,\mu,k) \. t^k\ts.
$$

More generally, recall the \emph{modified Macdonald polynomials}
\. $\wt H_{\mu}(\bx;q,t)\in \La[q,t]$,
see e.g.\ \cite[Thm~2.8]{Hag08}.  They are defined so that the corresponding
\defn{$(q,t)$-Kostka polynomials} \.
$\widetilde{K}_{\la\mu}(q,t)\in \nn[q,t]$ \ts become the coefficients
of their expansion in Schur polynomials.  A combinatorial interpretation
for the $(q,t)$-Kostka polynomials
remains open (see e.g.\ \cite[$\S$4.1]{vWil20}).

On the other hand, a \emph{signed}
\ts combinatorial interpretation of \ts $\widetilde{K}_{\la\mu}(q,t)\in \nn[q,t]$ \ts
follows immediately from \emph{Haglund's monomial formula} \ts
\cite[App.~A]{Hag08}, giving a combinatorial interpretation for coefficients
of their expansion in Schur functions, combined with a \ts $\GapP$ \ts formula
for the (usual) inverse Kostka numbers.
Using the LR rule again, we conclude:

\begin{prop}\label{prop:qt-analogues}
Modified Macdonald polynomials \ts $\{\wt H_{\mu}(q,t)\}$ \ts have structure constants in \ts $\GapP$.
\end{prop}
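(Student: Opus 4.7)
The plan is to adapt the bookkeeping of Proposition~\ref{p:Kostka-LR} to the $(q,t)$-deformed setting, assembling three ingredients:
(a)~a $\GapP$ formula for the Schur-expansion coefficients $\widetilde{K}_{\la\mu}(q,t)$,
(b)~a $\GapP$ formula for the inverse-transition coefficients $\widetilde{K}^{-1}_{\gamma\la}(q,t)$, and
(c)~the classical LR coefficients $c^\gamma_{\alpha\beta} \in \SP$.
Input~(a) is exactly the content of the paragraph immediately preceding the proposition: Haglund's monomial formula gives $\widetilde{H}_\mu = \sum_\alpha g_{\alpha\mu}(q,t)\, m_\alpha$ with $g_{\alpha\mu} \in \SP$, and composing with $m_\alpha = \sum_\la K^{-1}_{\alpha\la}\, s_\la$ places $\widetilde{K}_{\la\mu}(q,t) \in \GapP$ via Proposition~\ref{p:Kostka-LR}. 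Input~(c) is classical.

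Granted all three ingredients, the structure constants follow as in Proposition~\ref{p:Kostka-LR}: expanding $\widetilde{H}_\mu \widetilde{H}_\nu$ into the Schur basis using the LR rule, then re-expanding each $s_\gamma$ into the $\widetilde{H}$-basis using the inverse transition, yields
\[
d^\la_{\mu\nu}(q,t) \; = \; \sum_{\alpha,\beta,\gamma}\, \widetilde{K}_{\alpha\mu}(q,t)\, \widetilde{K}_{\beta\nu}(q,t)\, c^\gamma_{\alpha\beta}\, \widetilde{K}^{-1}_{\gamma\la}(q,t).
\]
The sum ranges over partitions $\alpha,\beta,\gamma$ of $|\mu|+|\nu|$ (each encoded in polynomial length in unary), and the summand is a product of $\GapP$ and $\SP$ terms. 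Closure of $\GapP$ under polynomially-witnessed sums and products then places $d^\la_{\mu\nu}(q,t)$ in $\GapP$.

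The main obstacle is input~(b): establishing $\widetilde{K}^{-1}_{\gamma\la}(q,t) \in \GapP$. Unlike the Hall--Littlewood case of Proposition~\ref{prop:q-analogues}, the matrix $\widetilde{K}(q,t)$ is not directly unitriangular in the dominance order, so Proposition~\ref{p:rho} does not apply immediately. My approach is to route through the integral-form Macdonald polynomials using the plethystic identity $\widetilde{H}_\mu[X(1-t);q,t] = J_\mu[X;q,t]$: the $J_\mu \to s_\la$ transition is triangular in the dominance order, which has polynomial height by~\cite{GK86}, so an adaptation of Proposition~\ref{p:rho} inverts it in $\GapP$. Translating back through the plethystic substitution $X \mapsto X/(1-t)$ then delivers $\widetilde{K}^{-1}_{\gamma\la}(q,t) \in \GapP$; the delicate technical point is to carry out this translation without introducing divisions that would push us from $\GapP$ into $\GapFP$.
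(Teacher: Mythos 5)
Your inputs (a) and (c), and the way you combine them, coincide with what the paper actually does: the paper obtains \ts $\widetilde{K}_{\la\mu}(q,t)\in\GapP$ \ts exactly as you describe (Haglund's monomial formula composed with the \ts $\GapP$ \ts inverse Kostka numbers) and then invokes the LR rule. The divergence is your input~(b). The paper never inverts the $(q,t)$-Kostka matrix: consistent with its later remark that Theorem~\ref{t:main-symQsymCOB} ``gives another proof'' of this proposition, what is actually established is that the coefficients of \ts $\wt H_\mu\ts \wt H_\nu$ \ts in the Schur (equivalently, monomial) basis are in~$\GapP$; no return to the \ts $\{\wt H_\la\}$ \ts basis is attempted.

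Your step (b) is a genuine gap, and it cannot be repaired in the form you want. Already for $n=2$ one has \ts $\wt H_{(2)}=s_2+q\ts s_{11}$ \ts and \ts $\wt H_{(1,1)}=s_2+t\ts s_{11}$, so the transition matrix has determinant \ts $t-q$ \ts and \ts $\widetilde{K}^{-1}$ \ts has entries such as \ts $t/(t-q)$. These are not polynomials in $q,t$, so there is no coefficient-extraction function to place in~$\GapP$. Correspondingly, the literal structure constants are rational: \ts $\wt H_{(1)}^2=\big((t-1)\ts\wt H_{(2)}+(1-q)\ts\wt H_{(1,1)}\big)/(t-q)$. The detour through \ts $J_\mu$ \ts does not help: the \ts $J_\mu\to s_\la$ \ts transition is triangular but its diagonal entries are the nontrivial products \ts $c_\mu(q,t)$ \ts (e.g.\ $1-t$ for $\mu=(1)$), so Proposition~\ref{p:rho}, which requires unitriangularity, does not apply, and both the inversion and the plethystic substitution \ts $X\mapsto X/(1-t)$ \ts force exactly the divisions you hoped to avoid. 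The ``delicate technical point'' you flag is therefore an obstruction, not a technicality: for the genuine \ts $\wt H$-basis structure constants the best available target is a \ts $\GapFP$-type class, and your argument should either stop at the Schur-expansion coefficients of the product (as the paper implicitly does) or weaken the claimed complexity class.
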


\begin{rem} \label{rem:qt-signs}
It follows from the argument above, finding a combinatorial interpretation
for the $(q,t)$-Kostka numbers would easily imply that a combinatorial
interpretation for the modified Macdonald polynomials.  It would be
interesting to find an unconditional proof of this claim.
\end{rem}

\medskip

\section{Quasisymmetric bases}\label{s:qsym}

\subsection{Posets of interest}\label{ss:qsym-posets}
Recall that \.  $\ComW_{n,k} \ts :=\{\al\in \nn^m_{\geq 1} \, : \, \al \vDash k, \ts  m\leq n\}$, and \ts let \ts
$\ComW_{n}:=\cup_k \ComW_{n,k}$ \ts be sets of strong compositions.
Denote by \ts $\mathcal{Z}_{n,k}=(\ComW_{n,k},\lhd)$ \ts  a poset on strong compositions w.r.t.\ the
dominance order. Let \ts $\cZ_{n}=\cup_k\cZ_{n,k}$.
Clearly, we have \ts $\height(\cZ_{n,k})= O(kn)$.

For $\al\in \nn^m$, define ${\sf{sort}}(\al)$ as the partition formed by listing $\al$ in weakly decreasing order.
For $\al,\be\in \ComW_{n,k}$ we say $\be$ is a \defn{refinement} of $\al$ if one can obtain $\al$ by adding consecutive parts of $\be$. For example, $\be=(1,2,2,1,1)$ refines $\al=(3,3,1)$, but $\be$ does not refine $\ga=(4,1,1,1)$.
This defines the \defn{refinement order} \ts ``$\preccurlyeq$'' \ts on \ts $\ComW_{n,k}$\ts.
Denote by \ts $\cD_{n,k}=(\ComW_{n,k},\unlhd')$ \ts a poset on \ts $\ComW_{n,k}$ \ts where
$$\al\unlhd'\be\., \al,\be\in \ComW_{n,k} \quad \Longleftrightarrow \quad \left\{
\aligned
& {\sf sort}(\be)\lhd {\sf sort}(\al) \ \ \text{if} \ \ {\sf sort}(\be) \ne {\sf sort}(\al), \\
&  \be\unlhd\al \ \ \text{if} \ \ {\sf sort}(\be)= {\sf sort}(\al).
\endaligned
\right.
$$
Observe that \. $\height\big(\cD_{n,k}\big)=O(kn^3)$.

\smallskip

\subsection{Integral quasisymmetric bases} \label{ss:qsym-int-bases}
Let \ts $\QSym_n\subseteq\cc[x_1,\ldots,x_n]$ \ts be the ring of quasisymmetric
polynomials in $n$ variables. The \defn{monomial quasisymmetric polynomials}
\ts $\{M_\al\. : \. \al \in \ComW_{n}\}$ \ts are defined as
\[
M_\al(x_1,\ldots,x_n) \, := \, \sum_{1\ts \le \ts i_1 \ts < \ts \ldots \ts < \ts i_\ell \ts \le \ts n}
x_{i_1}^{\al_1} \. \cdots \. x_{i_\ell}^{\al_\ell}\.,
\]
where  \ts $\ell =\ell(\al) \le n$.
Clearly, \ts $\{M_\al\}$ \ts is a linear basis in \ts $\QSym_n$.

Following \cite{Ges84}, the \defn{fundamental quasisymmetric polynomials} \.
$\{F_{\al} \. : \. \al\in \ComW_{n}\}$ \ts are defined by
\[
F_{\al}(x_1,x_2,\ldots,x_n)\, := \, \sum_{\be\ts\preccurlyeq\ts\al} \. M_{\be}(x_1,x_2,\ldots,x_n).
\]
Following \cite{BBSSZ14}, the \defn{dual immaculate polynomials} \.
$\{\Sc_\al^* \. : \. \al\in \ComW_{n}\}$ \ts can be defined by
\begin{equation*}\label{eq:dualimExp}
    \Sc_\al^*(x_1,x_2,\ldots,x_n) \, := \, \sum_{\be}K_{\al,\be}^I \. M_\be(x_1,x_2,\ldots,x_n),
\end{equation*}
where \ts $K_{\al,\be}^I$ \ts is the number of fillings of the diagram \ts
$D(\al)=\{(i,j) \. : \. i\leq \al_j\}$ \ts with content $\be$
such that entries weakly increase along rows and strictly increase down
the \emph{leftmost} column.

Following \cite{HLMvW11}, the \defn{quasisymmetric Schur polynomials} \.
$\{\mathcal{S}_\al\. : \. \al\in \ComW_{n}\}$ \ts can be defined by
\begin{equation*}\label{eq:qschExp}
    {\mathcal{S}_\al}(x_1,x_2,\ldots,x_n) \, := \, \sum_{\be}K_{\al,\be}^S \. M_\be(x_1,x_2,\ldots,x_n),
\end{equation*}
where \ts $K_{\al,\be}^S$ \ts is the number of fillings $T$ of $D(\al)$ with content $\be$ such that:
\begin{itemize}
    \item entries in $T$ weakly decrease across rows,
     \item entries in $T$ strictly increase down the leftmost column, and
     \item the triple rule holds. That is, embed $T$ in an $\ell(\al)\times\max(\al)$ rectangle, filling each newly added box with $0$. Call this $T'$. Then for $1\leq i<j\leq \ell(\al)$ and $2\leq k\leq \max(\al)$, we have
     \[T'(i,k)\leq T'(j,k)\neq 0 \ \ \Longrightarrow \ \ T'(i,k-1)< T'(j,k).\]
\end{itemize}
Observe that \ts $\{F_{\al}\}$, \ts $\{\Sc_\al^*\}$, and \ts $\{\mathcal{S}_\al\}$ \ts are linear bases in \ts $\QSym_n$.

\begin{ex}
Let \ts $\al=(2,2) \vDash 4$. We have:
    \begin{align*}
        F_{2,2} \, &= \, M_{1, 1, 1, 1} \. + \. M_{1, 1, 2}  \. + \. M_{2, 1, 1} \. + \. M_{2, 2}\ts,\\
        \Sc_{2,2}^*\, &= \, 3\ts M_{1, 1, 1, 1} \. + \.  2\ts M_{1, 1, 2} \. + \.  2\ts M_{1, 2, 1}  \. + \. M_{1, 3}
        \. + \.  M_{2, 1, 1} \. + \.  M_{2, 2}\ts, \text{ and}\\
        \mathcal{S}_{2,2}\, & = \, 2 \ts M_{1, 1, 1, 1} \. + \. M_{1, 1, 2} \. + \. M_{1, 2, 1}
        \. + \. M_{2, 1, 1} \. + \. M_{2, 2}\ts.
    \end{align*}
Let \ts $\be=(1,2,1) \vDash 4$.
Then \ts $K_{\al,\be}^I=2$ \ts as given by the following tableaux:
 \[
\ytableausetup
{boxsize=1em}
{\begin{ytableau}
   1 &  2 \\
   2 &  3
\end{ytableau}}
\qquad
\ytableausetup
{boxsize=1em}
{\begin{ytableau}
    1 &  3 \\
   2 &  2
\end{ytableau}} \raisebox{-0.35cm}{ .}
\]
Similarly, we have \ts $K_{\al,\be}^S=1$ \ts as given by the following tableau:
$$
\ytableausetup
{boxsize=1em}
{\begin{ytableau}
   2 &  2 \\
   3 &  1
\end{ytableau}} \raisebox{-0.35cm}{ .}
$$
\end{ex}

\smallskip
\subsection{Unitriangular property} \label{ss:qsym-int-upper}
Observe that the refinement order is a coarsening of dominance order.
By the definition of fundamental quasisymmetric functions,
we thus have unitriangular property for \ts $\{F_{\al}\}$ \ts w.r.t.\
the dominance order.

\smallskip

 We now prove the corresponding result
for the other two bases of quasisymmetric polynomials.

\smallskip

\begin{prop}\label{prop:quasi1}
Dual immaculate polynomials \ts $\{\Sc_\al^*\}$ \ts
have unitriangular property w.r.t.\ the dominance order:
$$
\Sc_\al^* \ = \, \sum_{\al\ts\unlhd\ts\be} \.  K^I_{\al,\be} \. M_\be \quad \text{and} \quad K^I_{\al,\al}\.= \. 1\ts,
$$
for all \ts $\al \in\ComW_{n,k}$.
\end{prop}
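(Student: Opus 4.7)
The plan is to establish both claims simultaneously from a single structural observation about immaculate tableaux. The key step is that in any filling $T$ of $D(\al)$ counted by $K^I_{\al,\be}$, the strict increase down the leftmost column gives $T(i,1) \geq i$, and then the weak increase along each row propagates this bound to every cell: $T(i,j) \geq i$ for all $(i,j)$ in row $i$ of the shape. This converts the two local monotonicity conditions into a global lower bound by row index.

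With this in hand, I would deduce the dominance statement as follows. Fix $k \geq 1$ and count the entries of $T$ whose value is at most $k$. By the key observation, every such entry must lie in one of the rows $1, 2, \ldots, k$, which together contain $\al_1 + \ldots + \al_k$ cells. On the other hand, the content condition says that the total number of entries $\le k$ in $T$ equals $\be_1 + \ldots + \be_k$. Comparing gives
\[
\be_1 + \ldots + \be_k \ \le \ \al_1 + \ldots + \al_k \qquad \text{for all } k,
\]
which is precisely $\al \unlhd \be$ in the convention of Section~\ref{s:notation}. Hence $K^I_{\al,\be} = 0$ unless $\al \unlhd \be$, giving the claimed triangularity.

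For $K^I_{\al,\al} = 1$, specializing $\be = \al$ makes every inequality above an equality, so rows $1,\ldots,k$ are filled \emph{entirely} with entries $\le k$ for each $k$. Taking $k = 1$ forces row $1$ to consist of $\al_1$ ones; iterating shows row $i$ consists of $\al_i$ copies of $i$. This filling visibly satisfies both the row and leftmost-column conditions, so it is the unique immaculate tableau of shape $\al$ and content $\al$. There is no serious obstacle here; the only points requiring attention are the paper's reversed dominance convention (so $\al \unlhd \be$ means partial sums of $\al$ \emph{exceed} those of $\be$) and the fact that entries of $T$ may exceed $\ell(\al)$, so the counting bound must be checked for all $k \geq 1$ rather than only for $k \le \ell(\al)$.
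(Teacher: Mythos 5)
Your proof is correct and rests on the same key observation as the paper's: the strictly increasing leftmost column combined with weakly increasing rows forces every entry in row $i$ to be at least $i$, so the entries $\le k$ all sit in rows $1,\ldots,k$ and the partial-sum inequality $\be_1+\cdots+\be_k\le\al_1+\cdots+\al_k$ (i.e.\ $\al\unlhd\be$ in the paper's convention) follows by counting. The only difference is that you derive $K^I_{\al,\al}=1$ directly from the same counting (forcing row $i$ to consist of $\al_i$ copies of $i$), whereas the paper cites \cite[Prop.~3.15]{BBSSZ14} for that fact; both routes are sound.
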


\begin{proof}
By the definition of \ts $\Sc_\al^*\ts$, we have \ts $K_{\al,\be}^I=0$ \ts if \ts $|\be|\neq k$.
It was shown in \cite[Prop.~3.15]{BBSSZ14}, that \ts $K_{\al,\be}^I=0$ \ts
unless $\al$ precedes $\be$ in lexicographic order, and that \ts $K_{\al,\al}^I=1$.
The entries increasing conditions in the definition of \ts $K_{\al,\be}^I$ \ts implies that
if $i$ appears in row~$j$ in a tableau,  then \ts $i\leq j$.
Thus \ts $K_{\al,\be}^I=0$ \ts unless \ts $\al\unlhd\be$.
This completes the proof.
\end{proof}

\smallskip

\begin{prop}\label{prop:quasi3}
Quasisymmetric Schur polynomials \ts $\{\mathcal{S}_\al\}$ \ts have unitriangular
property w.r.t.\ the order \ts $\unlhd':$
    $$(\ast) \qquad
    \mathcal{S}_\al \,
    = \, \sum_{\al\unlhd'\be}\. K_{\al,\be}^S \. M_\be \quad \text{and} \quad K_{\al,\al}^S\.= \. 1\ts,
    $$
for all \ts $\al \in\ComW_{n,k}$.
\end{prop}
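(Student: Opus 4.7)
My plan is to treat the two assertions in $(\ast)$ separately: first $K^S_{\al,\al} = 1$, then $K^S_{\al,\be} = 0$ unless $\al \unlhd' \be$. For the triangularity I would further split based on whether ${\sf sort}(\al) = {\sf sort}(\be)$.

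For $K^S_{\al,\al} = 1$, I would exhibit the unique valid filling. Define $T$ on $D(\al)$ by $T(i,j) := j$, so each row $j$ is filled entirely with $j$'s. The three rules are immediate: row weak-decrease and the triple rule are trivial because every row is constant, and leftmost-column strict-increase holds because $T(1,j) = j$. For uniqueness, the leftmost column has $\ell(\al)$ strictly increasing entries drawn from $\{1,\ldots,\ell(\al)\}$ (the support of the content $\al$), so it must equal $(1,2,\ldots,\ell(\al))$ from top to bottom. Combined with row weak-decrease and the content constraint, this forces row $j$ to consist entirely of $j$'s.

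For the case ${\sf sort}(\al) \ne {\sf sort}(\be)$, I would invoke the identity $s_\la = \sum_{\al' : {\sf sort}(\al') = \la} \mathcal{S}_{\al'}$ from \cite{HLMvW11}. Expanding both sides in the $M$-basis via $s_\la = \sum_\mu K_{\la\mu} m_\mu$ and $m_\mu = \sum_{{\sf sort}(\be') = \mu} M_{\be'}$, the coefficient of $M_\be$ gives
\[ \sum_{\al' \,:\, {\sf sort}(\al') \ts = \ts {\sf sort}(\al)} K^S_{\al',\be} \, = \, K_{{\sf sort}(\al), \ts {\sf sort}(\be)}, \]
which vanishes unless ${\sf sort}(\be) \unlhd {\sf sort}(\al)$ by classical Kostka triangularity. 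Since each $K^S_{\al',\be} \ge 0$, every individual term vanishes, yielding the first clause of $\unlhd'$.

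For ${\sf sort}(\al) = {\sf sort}(\be) = \la$, I would use a direct counting argument requiring only the column and row rules. Writing $\ell := \ell(\la) = \ell(\al) = \ell(\be)$, the content $\be$ uses values exactly in $\{1, \ldots, \ell\}$, so the $\ell$ strictly increasing entries in the leftmost column are forced to be $(1, 2, \ldots, \ell)$. Row weak-decrease then yields $T(i,j) \le j$ for every $(i,j) \in D(\al)$, so every box in rows $1, \ldots, v$ of $D(\al)$ contains a value $\le v$. Hence the total count $\be_1 + \cdots + \be_v$ of values $\le v$ in $T$ is at least $\al_1 + \cdots + \al_v$, which is precisely $\be \unlhd \al$ in the paper's convention. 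Notably the triple rule is not needed for this direction; the only subtle point is that equality of sorts pins down the leftmost column, after which the inequality is pure double counting.
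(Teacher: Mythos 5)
Your proof is correct, but it takes a genuinely different route from the paper's. The paper derives the triangularity \ts $K^S_{\al,\be}=0$ unless $\al\unlhd'\be$ \ts from the observation that the second and third tableau conditions force column entries to be distinct in a left-aligned diagram, and then cites \cite[Prop.~6.7]{HLMvW11} both for \ts $K^S_{\al,\al}=1$ \ts and for triangularity w.r.t.\ the lexicographic order. You instead (i)~prove \ts $K^S_{\al,\al}=1$ \ts from scratch by exhibiting the constant-row filling and forcing the leftmost column, (ii)~handle the cross-sort comparison by expanding the identity \ts $s_\la=\sum_{{\sf sort}(\al')=\la}\mathcal{S}_{\al'}$ \ts of \cite{HLMvW11} in the monomial quasisymmetric basis and invoking classical Kostka triangularity together with nonnegativity of the $K^S$, and (iii)~handle the equal-sort comparison by the first-column-forcing and double-counting argument. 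Your route is more self-contained on the diagonal and in case~(iii) --- indeed (iii) supplies exactly the composition-level refinement in the second clause of $\unlhd'$ that a purely column-distinctness count (which only compares ${\sf sort}(\al)$ and ${\sf sort}(\be)$) cannot see --- while it leans on the nontrivial Schur decomposition for~(ii), where the paper's counting of distinct column entries already suffices. One caution: in~(ii) you use $\unlhd$ with the standard orientation (dominated $=$ smaller), which is opposite to the paper's stated convention \ts $\al\unlhd\be\Leftrightarrow \al_1+\cdots+\al_i\ge\be_1+\cdots+\be_i$; your reading is the one under which the first clause of $\unlhd'$ is consistent with the displayed expansion of $\mathcal{S}_{2,2}$, so the discrepancy lies in the paper's conventions rather than in your argument, but you should state explicitly which orientation you intend.
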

\begin{proof}
Suppose a tableau $T$ is counted by \ts $K_{\al,\al}^S$. The second and third
tableau conditions ensure that no entries in~$T$ may repeat within a column.
Since the diagrams are left-aligned, this implies \ts $K_{\al,\be}^S=0$ \ts
unless \ts $\al\unlhd'\be$.  Now, it was shown in \cite[Prop.~6.7]{HLMvW11} that $(\ast)$
holds for the lexicographic order (which strengthens $\unlhd'$),
and that \ts $K_{\al,\al}^S=1$.  This completes the proof.
\end{proof}

\smallskip

\begin{proof}[Proof of Theorem~\ref{t:main-quasi}, first and second part]
The first part is straightforward:
$$M_\al \cdot M_\be \, = \, \sum_{\tau} \. c(\al,\be,\tau) \. M_\tau\.,
$$
where \ts $c(\al,\be,\tau)$ \ts is the number of ways to write
$$(\tau_1,\tau_2,\ldots) \, = \, (\al_1,\al_2,\ldots) \. + \.
(0,\ldots,0,\be_1,0,\ldots,0, \be_2, \ldots)\ts.
$$
For the fundamental quasisymmetric polynomials \ts $\{F_{\al}\}$,
a combinatorial interpretation for the structure constants is
given in \cite[$\S$4]{Ges84}.  In \cite[Cor.~5.12]{AS17}, this
combinatorial interpretation is restated (and reproved).  It
follows from there that the corresponding structure constants
are in~$\SP$.

For the second part, we include details only for the dual immaculate polynomials
\ts $\{\Sc_\al^*\}$.
The result for \ts $\{\mathcal{S}_\al\}$ \ts follows by the same argument,
replacing the dominance order~$\unlhd$ and with $\unlhd '$.

To simplify the notation, write \ts $K_{\al,\be}$ \ts for the
\emph{immaculate Kostka numbers} \ts $K_{\al,\be}^I$.
By Proposition~\ref{prop:quasi1} and their combinatorial interpretation,
\ts $\{K_{\al,\be}\}$ \ts are in~$\SP$.
The inverse coefficients \ts $K_{\al,\be}^{-1}$ \ts are defined by
$$
M_\alpha \, = \, \sum_{\be\unlhd \al } \. K_{\al,\be}^{-1} \. \Sc_\be^*\..
$$
Now Proposition~\ref{p:rho} implies that \. $\{K_{\al,\be}^{-1}\}$ \ts
are in \ts $\GapP$. Denote by  \ts $c^\ga_{\al\be}$ \ts the structure constants
defined by
$$
\Sc_\al^* \cdot \Sc_\be^* \, = \, \sum_{\ga} \.c^\ga_{\al\be} \. \Sc_\ga^* \ts.
$$
This gives:
$$
\aligned
\Sc_\al^* \. \cdot \. \Sc_\be^*  \, & = \, \Big(\sum_{\rho \ts \unlhd\ts \al} \. K_{\al,\rho} \, M_\rho \Big)
\cdot  \Big(\sum_{\omega\ts \unlhd\ts \be} \. K_{\be,\omega} \. M_\omega\Big) \\
& = \,
\sum_{\rho\ts \unlhd\ts \al} \. \sum_{\omega \ts \unlhd\ts \be}  \. \sum_{\tau} \.  K_{\al,\rho} \. K_{\be,\omega} \. c(\rho,\om,\tau) \, M_{\tau} \\
& = \,\sum_{\rho \ts \unlhd\ts \al} \. \sum_{\omega \ts \unlhd\ts \be} \. \sum_{\tau} \. \sum_{\gamma \ts \unlhd\ts  \tau} \.
 K_{\al,\rho} \. K_{\be,\omega} \. c(\rho,\om,\tau) \. K^{-1}_{\tau,\ga} \, \Sc_\ga^*\ts.
\endaligned
$$
Thus, the structure constants \ts $\{c^\ga_{\al\be}\}$ \ts are also in~$\GapP$.
\end{proof}

\begin{rem} \label{rem:quasi-signs}
It is easy to see that structure constants for dual immaculate
and quasisymmetric Schur polynomials can be negative.  Thus,
Theorem~\ref{t:main-quasi} proving their signed combinatorial
interpretation is optimal in this case.
\end{rem}

\smallskip

\subsection{Rational quasisymmetric bases} \label{ss:qsym-rational}
Following~\cite{AWvW23}, \defn{combinatorial quasisymmetric power sums} \ts
$\{\mathfrak{p}_\al\. : \. \al \in \ComW_{n}\}$ \ts are defined as
\[
\mathfrak{p}_\al \,  := \, \sum_{\be} \. K_{\al,\be}^{\mathfrak{p}} \. M_{\be}\ts,
\]
where \ts $K_{\al,\be}^\mathfrak{p}$ \ts is the number of \ts
$\ell(\be)\times\ell(\al)$ \ts matrices \ts $(r_{ij})$ \ts with entries in~$\nn$, such that:
\begin{itemize}
    \item \. $r_{i1}+ \ldots + r_{i \ts \ell(\al)} \ts = \ts\be_i$ \ts for all \ts $1\le i\le \ell(\be)$,
    \item \. ${\sf sort}(\al)_j$ \ts is the only nonzero entry in column $j$, for all \ts $1\le j\le \ell(\al)$, and
    \item \. the word obtained by reading entries top to bottom, left to right is~$\al$.
\end{itemize}
For \ts $\al\preccurlyeq\be$, denote by \ts $\al^{(i)}$ \ts
the corresponding parts in $\al$ which sum to \ts $\be_i$.
For example, for \ts $\al=(1,2,2,1,1)$ \ts and \ts $\be=(3,3,1)$,
we have \ts $\al^{(1)}=(1,2)$, \ts $\al^{(2)}=(2,1)$, and \ts $\al^{(3)}=(1)$.

Following \cite{BDHMN20}, \defn{type 1 quasisymmetric power sums} \.
$\{\Psi_\al\. : \. \al \in \ComW_{n}\}$ \ts are defined as
\begin{align*}
    \Psi_\al \, &:= \, z_\al \. \sum_{\al\preccurlyeq\be} \. \frac{1}{p(\al,\be)} \. M_{\be} \quad \text{ where } \quad p(\al,\be)\, := \, \prod_{i=1}^{\ell(\be)}\Big(\prod_{j=1}^{\ell(\al^{(i)})}\. \sum_{k=1}^{j}\. \al^{(i)}_k\Big).
\end{align*}
Similarly, \defn{type 2 quasisymmetric power sums} \.
$\{\Phi_\al\. : \. \al \in \ComW_{n}\}$ \ts are defined as:
\begin{align*}
    \Phi_\al \, &:= \, z_\al \. \sum_{\al\preccurlyeq\be}\. \frac{1}{s(\al,\be)}\. M_{\be}  \quad \text{ where }
    \quad s(\al,\be) \, : = \, \prod_{i=1}^{\ell(\be)}\Big(\ell(\al^{(i)})!\prod_{j=1}^{\ell(\al^{(i)})}\.\al^{(i)}_j\Big).
\end{align*}
Note that \ts $\mathfrak{p}_\al$ \ts have integer coefficients, while \ts
$\Psi_\al$ \ts and \ts $\Phi_\al$ \ts have rational coefficients.

\begin{ex}
    Let \ts $\al=(1,1,2)$. We have:
    \begin{align*}
        \mathfrak{p}_{1,1,2} \, &= \, 2 \ts M_{1,1,2} \. + \. M_{2, 2},\\
        \Psi_{1,1,2}\, &= \,2\ts M_{1,1,2}  \. + \. M_{2, 2} \. + \. \tfrac{4}{5} \ts M_{1,3}\. + \.  \tfrac{1}{3} \ts M_{4}, \text{ and}\\
        \Phi_{1,1,2}\, &= \,2 \ts M_{1,1,2}  \. + \. M_{2, 2} \. + \. \tfrac{4}{3}\ts M_{1,3}\. + \.  \tfrac{1}{2}\ts M_{4}.
    \end{align*}
For $\be=\al$, we have \ts $K_{\al,\be}^{\mathfrak{p}}=2$, given by the following matrices:
 \[
\begin{pmatrix}
0 & 1 & 0\\
0 & 0 & 1\\
2 & 0 & 0
\end{pmatrix}
\qquad
\begin{pmatrix}
0 & 0 & 1\\
0 & 1 & 0\\
2 & 0 & 0
\end{pmatrix}\raisebox{-0.35cm}{.}
\]
\end{ex}

\smallskip

\begin{prop}\label{prop:quasi2}
The following bases have triangular property w.r.t.\ the dominance order:
         \begin{itemize}
         \item \. combinatorial quasisymmetric power sum \. $\{\mathfrak{p}_\al\}$,
        \item \. type 1 quasisymmetric power sum \. $\{\Psi_\al\}$, and
        \item \. type 2 quasisymmetric power sum \. $\{\Phi_\al\}$.
    \end{itemize}
\end{prop}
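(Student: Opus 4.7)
The plan is to show that for each of $\mathfrak{p}_\al$, $\Psi_\al$, and $\Phi_\al$, the monomial expansion has the form $\sum_\be c_{\al,\be} M_\be$ supported on $\be$ with $\al \preccurlyeq \be$ in the refinement order (equivalently, $\be$ is obtained from $\al$ by merging consecutive parts), with $c_{\al,\al} \ne 0$. This suffices because refinement implies that each partial sum $\be_1 + \cdots + \be_i$ equals some partial sum $\al_1 + \cdots + \al_{K_i}$ with $K_i \ge i$, so partial sums of $\be$ weakly exceed those of $\al$. In the paper's convention, where $\al \unlhd \be$ means the partial sums of $\al$ are at least those of $\be$, this translates to $\be \unlhd \al$, giving the required triangular property with respect to the dominance order.

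For $\Psi_\al$ and $\Phi_\al$, the support condition is explicit in the definitions, which sum only over $\al \preccurlyeq \be$. For the diagonal term, when $\be = \al$ each block $\al^{(i)}$ is the single part $\al_i$, so $p(\al,\al) = \prod_i \al_i$ and $s(\al,\al) = \prod_i \al_i$; the diagonal coefficients $z_\al / p(\al,\al)$ and $z_\al / s(\al,\al)$ thus both reduce to $z_\al / \prod_i \al_i$, which is strictly positive.

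For $\mathfrak{p}_\al$, I analyze the matrices $(r_{ij})$ counted by $K^\mathfrak{p}_{\al,\be}$. The reading-word condition, parsed row-by-row with left-to-right order within each row as in the worked example, forces the nonzero entries in row $i$ to spell out a consecutive block $\al_{K_{i-1}+1}, \ldots, \al_{K_i}$ of $\al$, where $0 = K_0 < K_1 < \cdots < K_{\ell(\be)} = \ell(\al)$; the strict inequalities follow from $\be$ being a strong composition, so each row is nonempty. Summing in row $i$ yields $\be_i = \al_{K_{i-1}+1} + \cdots + \al_{K_i}$, so $\be$ coarsens $\al$. To see $K^\mathfrak{p}_{\al,\al} \ne 0$, I exhibit the matrix with $\be = \al$ obtained by placing the single nonzero entry of row $i$ (of value $\al_i$) in column $j = \sigma(i)$, where $\sigma$ is any permutation satisfying $\text{sort}(\al)_{\sigma(i)} = \al_i$; the row and column sums are then correct and the reading word is $\al_1, \al_2, \ldots$, so $K^\mathfrak{p}_{\al,\al} \ge 1$.

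The main obstacle is the matrix analysis for $\mathfrak{p}_\al$: one must confirm that the reading order in the definition is row-major (as the worked example indicates) and then carefully match the reading-word constraint against the row-by-column nonzero-entry placements. The rest of the proof is essentially definition-chasing, combined with the observation that refinement implies dominance.
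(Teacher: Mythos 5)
Your proof is correct and follows essentially the same route as the paper: show that the monomial support of each of $\mathfrak{p}_\al$, $\Psi_\al$, $\Phi_\al$ consists of coarsenings of $\al$ in the refinement order, note that refinement implies comparability in dominance, and check that the diagonal coefficient is nonzero. The only difference is that where the paper cites \cite{AWvW23} (Prop.~5.15) for the support and diagonal of $\mathfrak{p}_\al$ and \cite{BDHMN20} for the diagonal values \ts $z_\al\prod_i \al_i^{-1}$ \ts of $\Psi_\al$ and $\Phi_\al$, you verify these facts directly from the definitions (your matrix analysis and witness for $K^{\mathfrak{p}}_{\al,\al}\ge 1$ are both correct), which makes the argument self-contained but does not change the method.
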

\begin{proof}
It is proved in~\cite[Prop.~5.15]{AWvW23}, that \ts $K^{\mathfrak{p}}_{\al,\be}=0$ \.
unless \ts $\al\preccurlyeq\be$.  By definition of \ts $\Psi_\al$ \ts and \ts $\Phi_\al$,
the corresponding Kostka constants \ts $K^{\Psi}_{\al,\be}=K^{\Phi}_{\al,\be}=0$ \ts
unless $\al\preccurlyeq\be$.  Since the refinement order is a coarsening of the dominance order,
this proves the first part of the triangular property.

For the second part, it was shown in \cite[Prop.~5.15]{AWvW23} and \cite{BDHMN20}, that
$$K_{\al,\al}^{\mathfrak{p}} \, = \, \prod_{i=1}^n m_i! \qquad \text{and}
\qquad
K_{\al,\al}^\Psi \, = \, K_{\al,\al}^\Phi \, = \, z_\al \. \prod_{i=1}^{\ell(\al)}\frac{1}{\al_i}\..
$$
This completes the proof.
\end{proof}

\smallskip

\begin{proof}[Proof of Theorem~\ref{t:main-quasi}, third part]
The proof of Theorem~\ref{t:main-quasi} (first and second part) extends similarly to show these structure coefficients are in~$\GapFP$.

However, by \cite[Prop.~3.16]{BDHMN20} and \cite[Eq.~(26)]{BDHMN20}, $\{\Psi_\al\}$ and $\{\Phi_\al\}$ have structure constants in $\SP/\FP$. Similarly \cite[Thm.~5.6]{AWvW23} implies structure constants of $\{\mathfrak{p}_\al\}$ are in $\SP/\FP$.
\end{proof}

\medskip

\section{Polynomial bases}\label{s:poly}

\subsection{Posets of interest}\label{ss:poly-posets}
For \ts $k\in\zz_{>0}$, recall \ts $\vrV_{n,k} \ts :=\{\al\in \nn^n \, : \, \al \vDash k\}$.
Denote by \ts $\mathcal{I}_{n,k}=(\vrV_{n,k},\lhd)$ \ts the poset on weak compositions
with respect to dominance order. Clearly, we have \ts $\height(\mathcal{I}_{n,k})=O(kn)$.
Similarly, let \ts $\vrV_{n}=\bigcup_k \vrV_{n,k}$ \ts and denote by \ts $\mathcal{I}_{n}=(\vrV_{n},\lhd)$ \ts
the poset on weak compositions with respect to the dominance order.
Clearly, we have \ts $\height(\mathcal{I}_{n})=O(n^3)$.

To all \ts $\al\in V_{n,k}$, we can uniquely associate a word \ts $w \in S_{\infty}$ \ts
s.t.\ \ts $\inv(w)=k$, via the inverse map to the Lehmer code: \ts $\al=\code(w)$.
Define the set
\[
L_{n,k}\, := \, \{\al\in \vrV_{n,k}  \ : \ \al_i\leq n-i \ \ \text{for all} \ \ 1\le i \le n\}.
\]
By the definition of the Lehmer code, we have
$$
L_{n,k} =\{\code(w) \ : \ w\in S_n  \ \, \text{s.t.} \ \, \inv(w)=k\}.
$$
Define the \defn{Lehmer poset} \. $\mathcal{L}_{n,k}:=(L_{n,k},\lhd)$.

\smallskip

\subsection{Polynomial bases, first batch}\label{ss:poly-bases-I}
For $\al\in \vrV_{n}$, let ${\rm flat}(\al)$ be the strong composition formed
by removing \ts $0$'s \ts in~$\al$.  Following \cite[$\S$3.4]{Hiv00} (see also \cite{AS17,BJS93}),
the \defn{monomial slide polynomials} \. $\{\mathfrak{M}_{\al} \. : \. \al\in\vrV_{n}\}$ \.
and the \defn{fundamental slide polynomial} \. $\{\FS_{\al} \. : \. \al\in\vrV_{n}\}$ \.
are defined as
\[
  \mathfrak{M}_{\al}(\bx) \, := \, \sum_{\substack{\be\ts \unlhd \ts \al \\ {\rm flat}(\be) \ts = \ts {\rm flat}(\al)}} \bx^\be
  \qquad\text{and} \qquad
\FS_{\al}(\bx) \, := \, \sum_{\substack{\be\ts \unlhd \ts \al \\ {\rm flat}(\be)\ts \preccurlyeq\ts {\rm flat}(\al)}}\bx^\be\..
  \]

We consider certain (semistandard) tableaux \ts
$T: D(\al)\to \nn_{\ge 1}$ \ts of shape \ts $D(\al)$, which we define below.
For a tableau $T$, define the \defn{augmented tableau} $\hat{T}$ to be the tableaux formed by adding a box right before
each row of $D(\al)$, filling the new box in row $i$ with entry~$i$. Following \cite{Mas09}, the \defn{Demazure atoms}
\. $\{{\atom}_{\al}\. : \. \al\in\vrV_{n}\}$ \ts introduced in~\cite{LS90}, can be defined as
\[
{\atom}_{\al}(\bx) \, := \, \sum_{\be}\. K_{\al,\be}^\atom \.\bx^\be.
\]
Here \. $K_{\al,\be}^\atom$ \. is the number of augmented tableaux $T$ of shape $D(\al)$ which satisfy:
\begin{itemize}
    \item \, column entries are distinct
    \item \, entries weakly decrease across rows
    \item \, for \ts all \ts $i<j$ \ts one of the following holds:
    \begin{itemize}
        \item \. $c<b< a$,
        \item \. $a\leq c< b$,
        \item \. $b<a\leq c$,
    \end{itemize}
    where \,  $c\gets T(i,k-1)$, \ts $a\gets T(i,k)$, \ts $b\gets T(j,k)$ \ if \ $\al_i\geq \al_j$\., \\
    and \, \ts $b\gets T(i,k-1)$, \ts $c\gets T(j,k-1)$, \ts $a\gets T(j,k)$ \ if \ $\al_i<\al_j$\..
\end{itemize}

\begin{ex}
For \ts $\al=(0,2,1)$, we have \ts ${\atom}_{\al}(x_1,x_2,x_3) = x_1x_2x_3 + x_2^2x_3$, with monomials
corresponding to the following augmented tableaux:
\[\ytableausetup
{boxsize=1em}
{\begin{ytableau}
   *(lightgray) 1  \\
   *(lightgray) 2 &  2 & 1 \\
   *(lightgray) 3 &  3 \\
\end{ytableau}}
\qquad
{\begin{ytableau}
   *(lightgray) 1  \\
   *(lightgray) 2 &  2 & 2 \\
   *(lightgray) 3 &  3 \\
\end{ytableau}}
\raisebox{-0.7cm}{.}
\]
Here the entries added to each row are drawn in gray.
\end{ex}

\smallskip

\subsection{Polynomial bases, second batch}\label{ss:poly-bases-II}
Let \ts $D\subseteq[n]\times[n]$ \ts be a square diagram where entries (boxes) are
labelled with \ts $\bullet$ \ts or~$\ts\circ$.  We defined two types of moves
on these labeled diagrams as follows.

Take row \ts $i\in[n]$ \ts and box \ts $(i,j)\in D$ that is rightmost with label~$\bullet\ts$.
Let \. $i':=\max\{r\in[i] \, : \, (r,j)\not\in D\}$.
Suppose each \ts $(r,j)\in D$ \ts with \ts $i'+1\leq r \leq i$ has label~$\bullet\ts$.  Define:
\begin{itemize}
    \item the \defn{Kohnert move} \ts on $D$ at $(i,j)$ outputs the diagram \ts $D'=D-(i,j) + (i',j)$.  Let the new box $(i',j)$ have label $\bullet\ts$.
    \item the \defn{K-Kohnert move} \ts on $D$ at $(i,j)$ outputs the diagram \ts $D'=D+ (i',j)$. Let the new box $(i',j)$ have label $\bullet\ts$, and the box $(i,j)$ have reassigned label $\circ\ts$.
\end{itemize}

Let \ts ${\sf Koh}(D)$ \ts denote the set of all diagrams obtainable through applying successive Kohnert moves on diagram~$D$. Similarly,
following~\cite{RY15}, let \ts ${\sf KKoh}(D)$ \ts denote the set of all diagrams obtainable through applying successive Kohnert and K-Kohnert moves on~$D$.

For a subset $S\subseteq[n]\times[n]$, let \ts $\wtx(S)\in \nn^n$ \ts denote the \defn{weight} of~$S$, defined by
\[
\wtx(S)_i\, := \, \# \{(i,j)\in S \ : \ j\in[n]\}.
\]
It was shown in \cite{Koh91}, that the \defn{key polynomials} \. $\{\kappa_\al\. : \. \al\in\vrV_{n}\}$ \. introduced in \cite{LS90}, can be computed combinatorially as follows:
 \[
 \kappa_\al(\bx)  \, := \, \sum_{S\ts\in\ts{\sf Koh}(D(\al))} \. \bx^{\wtx(S)},
 \]
 where \. $D(\alpha):= \big\{(i, j)\in [n]\times [n] \, :  \,  \alpha_i \geq j\big\}$.
Similarly, it was shown in  \cite{RY15,PY23}, that the \defn{Lascoux polynomials} $\{\mathcal{L}_\al\. : \. \al\in\vrV_{n}\}$  \. introduced in \cite{Las00}, can be computed combinatorially as follows:
 \[\mathcal{L}_\al(\bx)  \, := \, \sum_{S\ts\in\ts{\sf KKoh}(D(\al))}(-1)^{|\al|-\#S}\. \bx^{\wtx(S)}.\]

\begin{ex}
For \ts $\al=(0,2,1)$, we have:
\begin{align*}
    \kappa_{(0,2,1)}\. &= \. x_1^2x_2 + x_1x_2^2 + x_1^2x_3 + x_1x_2x_3 + x_2^2x_3, \ \.\text{ and}\\
    \mathcal{L}_{(0,2,1)} \. &= \. \kappa_{(0,2,1)} \. - \. (x_1^2x_2^2 + 2x_1^2x_2x_3 + 2x_1x_2^2x_3) \. + \. x_1^2x_2^2x_3\ts.
\end{align*}
For example, the degree $4$ terms of \ts $\mathcal{L}_{(0,2,1)}$ \ts correspond to the following diagrams:
\[\ytableausetup
{boxsize=1em}
{\begin{ytableau}
    \bullet &  \bullet  \\
    \bullet &  \circ  \\
     \  &  \ \\
\end{ytableau}}
\qquad
{\begin{ytableau}
    \bullet &  \bullet  \\
    \bullet &  \ \\
     \circ   &  \ \\
\end{ytableau}}
\qquad
{\begin{ytableau}
    \bullet &  \bullet  \\
    \circ &  \ \\
     \bullet   &  \ \\
\end{ytableau}}
\qquad
{\begin{ytableau}
    \bullet &  \  \\
    \bullet &  \bullet \\
    \circ    &  \ \\
\end{ytableau}}
\qquad
{\begin{ytableau}
    \ &  \bullet  \\
    \bullet &  \circ \\
    \bullet    &  \ \\
\end{ytableau}}\raisebox{-0.7cm}{ .}
\]
\end{ex}

\smallskip

\subsection{Polynomial bases, third batch}\label{ss:poly-bases-III}
Let \ts $D\subset[n]\times[n]$ \ts be a diagram and let \ts $(i,j)\in D$ \ts be a box in the diagram.
The \defn{ladder move} \ts is a transformation \ts $D\ts \to \ts  D-(i,j)+(i-k,j+1)$ \ts and the
\defn{K-ladder move} \ts  is a transformation \ts   $D \ts \to \ts D + (i-k,j+1)$, allowed only
when the following are satisfied:
\begin{itemize}
    \item \ts $(i,j+1)\not\in D$,
    \item \ts $(i-k,j),(i-k,j+1)\not\in D$ \. for some \. $0<k<i$, and
    \item \ts $(i-l,j),(i-l,j+1)\in D$ \. for all \. $0<l<k$.
\end{itemize}

Recall that the \defn{Lehmer code} \. $\code(w) \in \nn^n$ \ts uniquely
determines \ts $w\in S_{\infty}$.
Let ${\sf rPipes}(w)$ denote the set of diagrams obtainable through successive ladder moves, starting from $D(\code(w))$, where $w\in S_n$.
Similarly take ${\sf Pipes}(w)$ to be the set of diagrams obtainable through successive ladder and K-ladder moves, starting from $D(\code(w))$, where $w\in S_n$.

It was proved in \cite{BB93}, that the \defn{Schubert polynomials} \ts $\{\mathfrak{S}_{w} \. : \. w\in S_n\}$ \ts
introduced in~\cite{LS82a}, can be defined as:
\[
\mathfrak{S}_{w}(\bx) \, : = \, \sum_{P\ts \in \ts {\sf rPipes}(w)} \. \bx^{\wtx(P)}.
\]
Similarly, it was proved in \cite{FK94}, that the \defn{Grothendieck polynomials} \ts
$\{\mathfrak{G}_{w} \. : \. w\in S_n\}$ \ts introduced in \cite{LS82b}, can be defined as:
\[
\mathfrak{G}_{w}(\bx) \, = \, \sum_{P\in{\sf Pipes}(w)} \. (-1)^{|\al|\ts - \ts \#P} \. \bx^{\wtx(P)}.
\]
  Since $\code(w)$ uniquely determines $w$, we may write
$\mathfrak{S}_{\al}:=\mathfrak{S}_{\code^{-1}(\al)}$ and $\mathfrak{G}_{\al}:=\mathfrak{G}_{\code^{-1}(\al)}$.

\begin{ex}
Let \ts $w=2143$ \ts and \ts $\al:=\code(w)=(1,0,1,0)$.  We have:
\begin{align*}
    \mathfrak{S}_{\al}(x_1,x_2,x_3) \. &= \. x_1^2 + x_1x_2 + x_1x_3\ts,  \ \. \text{and}\\
    \mathfrak{G}_{\al}(x_1,x_2,x_3) \. &= \. \mathfrak{S}_{\al} \. - \. (x_1^2x_2 + x_1^2x_3 + x_1x_2x_3) \. + \. x_1^2x_2x_3\ts.
\end{align*}
The degree $3$ terms of $\mathfrak{G}_{\al}$ correspond to the following diagrams:
\[\ytableausetup
{boxsize=1em}
{\begin{ytableau}
    + & \ & + & \  \\
    \ & \ & \ & \  \\
    + & \ & \ & \  \\
    \ & \ & \ & \
\end{ytableau}}
\qquad
{\begin{ytableau}
   + & \ & \ & \  \\
    \ & + & \ & \  \\
    + & \ & \ & \  \\
    \ & \ & \ & \
\end{ytableau}}
\qquad
{\begin{ytableau}
  + & \ & + & \  \\
    \ & + & \ & \  \\
    \ & \ & \ & \  \\
    \ & \ & \ & \
\end{ytableau}}\raisebox{-1cm}{ .}
\]
\end{ex}

\smallskip

\subsection{Unitriangular property} \label{ss:poly-unitriangular}
First, we consider homogeneous bases:

\begin{prop}\label{prop:hompoly}
The following linear bases in \ts $\cc[x_1,\ldots,x_n]$ \ts have unitriangular property w.r.t.\ the dominance order $\lhd$ on~$\vrV_{n}:$
         \begin{itemize}
         \item monomial slide polynomials \ts $\{\mathfrak{M}_{\al}\}$,
         \item fundamental slide polynomials \ts $\{\FS_\al\}$,
         \item Demazure atoms \ts $\{{\atomrm}_{\al}\}$,
         \item key polynomials \ts $\{\kappa_\al\}$,
         \item Schubert polynomials \ts $\{\mathfrak{S}_{\al}\}$,
         \item Lascoux polynomials \ts $\{\mathfrak{L}_{\al}\}$, and
    \item Grothendieck polynomials \ts $\{\mathfrak{G}_{\al}\}$.
    \end{itemize}
\end{prop}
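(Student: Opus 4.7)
The plan is to show, for each basis $\{f_\alpha\}$ listed, that expanding $f_\alpha = \sum_\beta c_{\alpha,\beta}\bx^\beta$ in the monomial basis yields $c_{\alpha,\beta}=0$ unless $\beta \unlhd \alpha$, and $c_{\alpha,\alpha}=1$. The monomial and fundamental slide polynomials satisfy both properties immediately from their defining sums, which are indexed explicitly by $\beta \unlhd \alpha$ (with additional restrictions on $\mathrm{flat}(\beta)$) and pick up $\beta=\alpha$ with coefficient one.

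For Demazure atoms I would exploit the structure of the augmented tableau $\hat{T}$ of shape $D(\alpha)$: the leftmost (added) cell of row $i$ carries entry $i$, and the weakly-decreasing row condition on $\hat{T}$ forces every entry in row $i$ of $T$ to lie in $\{1,\dots,i\}$. Counting entries by value then gives
\[
\beta_1 + \cdots + \beta_k \;=\; \#\{t \in T : t \leq k\} \;\geq\; \alpha_1 + \cdots + \alpha_k,
\]
which is $\beta \unlhd \alpha$ in the paper's convention. Equality at every $k$ forces every entry of row $i$ to equal $i$, so the canonical filling $T(i,j) = i$ is the unique content-$\alpha$ augmented tableau; a direct check of the triple condition then gives $c_{\alpha,\alpha}=1$.

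For key polynomials and Schubert polynomials, the argument reduces to a single-move weight analysis. A Kohnert move $D \to D - (i,j) + (i',j)$ with $i' < i$ (resp.\ a ladder move $D \to D - (i,j) + (i-k,j+1)$ with $k>0$) transfers one box from row $i$ to a strictly earlier row. This strictly increases the partial sum $\wtx(\cdot)_1 + \cdots + \wtx(\cdot)_r$ for indices $r$ strictly between source and destination rows, and leaves the remaining partial sums unchanged. Induction on the move-sequence length starting from $D(\alpha)$ (of weight $\alpha$) yields $\wtx(P) \unlhd \alpha$ for every $P \in {\sf Koh}(D(\alpha))$ or $P \in {\sf rPipes}(\code^{-1}(\alpha))$, with strict inequality whenever any move is applied. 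Hence the initial diagram is the unique one of weight exactly $\alpha$, so $c_{\alpha,\alpha}=1$.

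For Lascoux and Grothendieck polynomials, I would extend the analysis to K-Kohnert and K-ladder moves, each of which simply adds a box in an earlier row without removing the existing one. Both operations strictly increase some partial sum and raise $|\wtx(P)|$ by one, so induction still delivers $\wtx(P) \unlhd \alpha$ for every $P$ in the respective orbit (interpreting $\lhd$ on $\vrV_n$ via partial sums alone, as in Section~\ref{ss:poly-posets}). Since $|\wtx(P)| = \#P$, any contribution to $\bx^\alpha$ forces $\#P = |\alpha|$, and hence no K-moves; the key/Schubert case then identifies this unique diagram as the initial one, carrying sign $(-1)^{|\alpha| - |\alpha|} = +1$. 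The main subtlety will be confirming that $\mathcal{I}_n$ is intended to compare weak compositions of all sizes via partial sums alone, so that the K-move weights $\beta$ with $|\beta| > |\alpha|$ still lie in the triangular support $\{\beta \unlhd \alpha\}$.
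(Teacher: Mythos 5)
Your proof is correct and follows essentially the same route as the paper: immediate from the definitions for the slide polynomials, the row-$i$ entry bound for Demazure atoms, and the observation that each Kohnert/ladder move (and each K-move) weakly increases all partial sums of the weight and strictly increases at least one. The subtlety you flag for the inhomogeneous Lascoux and Grothendieck cases --- that $\lhd$ on $\vrV_n$ must compare weights of different total degree via partial sums alone --- is real and is glossed over in the paper's one-line treatment, and your resolution is the intended one.
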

\begin{proof}
The results for \ts $\{\mathfrak{M}_{\al}\}$ \ts and \ts $\{\FS_\al\}$ \ts
follow directly from their definition.
For the Demazure atoms polynomials, consider a tableau $T$ counted by $K^\atom_{\al,\be}$.
An entry $i$ in~$T$ must lie in a row weakly below row~$i$.  If \ts $\al\unlhd \be$ fails,
this condition must be broken in every such tableaux, so \ts $K^\atom_{\al,\be}=0$.
Similarly, we have \ts $K^\atom_{\al,\al}=1$, since
the unique valid tableau has all $i$'s in row~$i$.

The result for \ts $\{\kappa_\al\}$ \ts follows from the observation,
that applying a Kohnert move produces a monomial higher in dominance order than~$\al$.
Similarly, for the Schubert polynomials, note that the corresponding
\defn{Schubert--Kostka numbers} \ts $K_{\al,\be}=0$ \ts unless \ts $\al\unlhd\be$.
Indeed, applying a ladder move produces a monomial higher in dominance order than~$\al$.

Finally, the result for \ts $\{\mathfrak{L}_{\al}\}$ \ts follows since applying a
Kohnert  or a K-Kohnert move produces a monomial higher in dominance order than~$\al$.
Similarly, the result for \ts $\{\mathfrak{G}_{\al}\}$ \ts follows since applying
a ladder  or K-ladder move produces a monomial higher in dominance order than~$\al$.
\end{proof}

\smallskip

The proof below follows the idea of the proof of Proposition~\ref{p:Kostka-LR}.

\begin{proof}[Proof of Theorem~\ref{t:main-schubert}]
The first results
follow from Theorems~5.5 and~5.11 in~\cite{AS17}.
For the remaining results, we include details for the Schubert polynomials \ts $\{\mathfrak{S}_{\al}\}$.
The result for other bases follows by the same argument.
By Proposition~\ref{prop:hompoly}, we have:
$$
\Sc_\al(\bx) \, = \, \sum_{\al\unlhd\omega} \. K_{\al,\omega} \. \bx^\omega.
$$
By the \emph{pipe dream} combinatorial interpretation above,
the \defn{Schubert--Kostka numbers} \. $\{K_{\al,\omega}\}$ \ts are in~$\SP$.
The \defn{inverse Schubert--Kostka numbers} \ts are defined by
$$
\bx^\al \, = \, \sum_{\al \ts\unlhd\ts \omega } \. K^{-1}_{\al,\omega} \. \Sc_{\omega}.
$$
Again Proposition~\ref{p:rho} implies that \. $K^{-1} \in \GapP$.

Recall that the \defn{Schubert coefficients} \ts $\{c^\ga_{\al\be}\}$ \ts are
defined by
$$
\Sc_\al \cdot \Sc_\be \, = \, \sum_{\ga} \.c^\ga_{\al\be} \. \Sc_\ga\..
$$
We have:
$$
\aligned
\Sc_\al \cdot \Sc_\be  \, & = \, \Big(\sum_{\al\ts \unlhd\ts\omega} \. K_{\al,\omega} \. \bx^\omega\Big)
\cdot  \Big(\sum_{\be\ts \unlhd\ts\rho} \. K_{\be,\rho} \. \bx^\rho\Big) \,  = \,
\sum_{\al\ts \unlhd\ts\omega} \. \sum_{\be\unlhd\rho} \.  K_{\al,\omega} \. K_{\be,\rho} \. \bx^{\omega+\rho} \\
& = \,\sum_{\al\ts \unlhd\ts\omega} \. \sum_{\be\ts \unlhd\ts\rho}  \. \sum_{\omega+\rho \ts \unlhd\ts \ga} \.
 K_{\al,\omega} \. K_{\be,\rho} \.  K^{-1}_{\omega+\rho,\ga} \cdot \Sc_\ga\ts.
\endaligned
$$
Thus Schubert coefficients \ts $\{c^\ga_{\al\be}\}$ \ts are in~$\GapP$, as desired.
\end{proof}

\begin{rem}
The poset of monomials is also unitriangular w.r.t.\ the reverse lexicographic order, so
the M\"obius inversion can also be used in this setting.  However, the height
of the resulting poset is exponential, so Proposition~\ref{p:rho} is not applicable.
For example, for \ts $w\in S_{2n}$ \ts such that \ts $\code(w)=(0^{n-1},n,0^{n})$,
the monomial support of \ts $\Sc_w$ \ts corresponds to all weak compositions of~$n$,
the number of which is exponential in~$n$.
\end{rem}

\begin{rem}\label{rem:inv-Kostka}
In some cases, the (generalized) inverse Kostka numbers is computed explicitly.
Notably, in \cite[$\S$5]{NT23}, the authors use the lattice structure and the
Crosscut Theorem (see~$\S$\ref{ss:finrem-inv}) to prove that for the fundamental
slide polynomials, all inverse Kostka numbers are in $\{0,\pm 1\}$,
cf.~$\S$\ref{ss:musings-dom}.  See also
inverse Kostka numbers for \emph{forest polynomials} given in \cite[Prop.~10.16]{NST24}.
\end{rem}

\begin{rem}\label{rem:Schubert-signs}
It is easy to see that the structure constants for Demazure atoms,
key, and Lascoux polynomials can be negative without predictable signs.
Thus,  Theorem~\ref{t:main-quasi} proving their signed combinatorial
interpretation is optimal in this case.  For Schubert polynomials
the structure constants are always positive, and
conjectured to be not in~$\SP$ by the first author
\cite[Conj.~10.1]{Pak-OPAC} (cf.\ \cite[Problem~11]{Sta00}).
For Grothendieck polynomials the structure constants have predictable signs
(see e.g.~\cite{Bri02}).  Adjusting for signs, whether these have
a combinatorial interpretation also remains a major open problem.
\end{rem}

\medskip

\section{Plethysm} \label{s:plethysm}

The following proof streamlines and extends the argument in \cite[$\S$9]{FI20}.

\begin{proof}[Proof of Theorem~\ref{t:main-plethysm}]
    Since $\{f_\la\}$, $\{g_{\mu}\}$ are linear bases in~$\La_n$\ts, we can write
    \[
    f_\la[g_{\mu}] \, = \, \sum_{\nu} \. d_{\la\mu}^\nu  \. m_{\nu}\ts.
    \]
By \eqref{eq:inv-Kostka},
the result follows once we show that coefficients \ts $\{d_{\la\mu}^\nu\}$ \ts
are in $\GapP$.

Using combinatorial interpretations in Section~\ref{s:sym},
let \ts $Y^{(g)}_\mu = \{\tau,\tau',\tau'',\ldots\}$ \ts
be a set of monomials in $g_\mu:$
$$
g_{\mu} \, = \, m_\tau \. + \.  m_{\tau'} \. + \.  m_{\tau''} \. + \. \ldots
$$
Denote \ts $K_{\la,\rho}^{(f)}:=|Y^{(f)}_\la| = [\bx^{\rho}]\ts f_{\la}$.  Then:
    \begin{align*}
        f_\la[g_{\mu}]\, &= \, f_\la\big(\bx^{\tau},\bx^{\tau'},\ldots\big)
        \, = \, \sum_{\rho } \. K_{\la,\rho}^{(f)} \. m_{\rho}\big(\bx^{\tau},\bx^{\tau'},\ldots\big) \\
        &= \, \sum_{\rho } \. K_{\la,\rho}^{(f)} \. \sum_{w} \. \bx^{w(\rho)_1 \. \tau} \cdot \bx^{w(\rho)_2 \. \tau'} \. \cdots
        \end{align*}
Taking coefficients in \. $\bx^\nu$  on both sides implies that \. $\{d_{\la\mu}^\nu\} \in \SP$.  This completes the proof.
\end{proof}

\begin{rem}\label{rem:plethysm-signs}
It remains a major open problem whether there is a combinatorial interpretation
for plethysm coefficients, even in a special case when \ts $\la=(r^{n/r})$ \ts
is a rectangle, see \cite[Problem~9]{Sta00}.
The first author conjectured that these are not in~$\ts \SP$ \ts
\cite[Conj~8.8]{Pak-OPAC}.
\end{rem}

\medskip

\section{Further applications} \label{s:app}

We conclude with one additional application relating two type of structure
constants: into symmetric and quasisymmetric polynomials.

\begin{thm}\label{t:main-symQsymCOB}
Let \ts $A=\cup A_n$ \ts be a family of combinatorial objects, and let
\ts $\{G_w(\bx) \. : \. w\in A\}$ \ts be a family of symmetric polynomials
such that
$$
G_w(\bx) \. = \. \sum_{\al\ts\in \ts \vrV_n} \. c_{w\al} \ts F_{\al}(\bx)\ts,
$$
where the coefficients \ts $\{c_{w\al}\}$ \ts are in $\GapP$.
Consider the coefficients defined by
$$
G_w(\bx) \. = \. \sum_{\la\ts\in \ts \rU_n} \. d_{w\la} \ts s_{\la}(\bx)\ts.
$$
Then \ts $\{d_{w\la}\}$ \ts are also in $\GapP$.  Furthermore, the result holds
when \ts $\{F_{\al}\}$ \ts are replaced with \ts $\{M_\al\}$.
\end{thm}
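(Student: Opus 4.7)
The plan is to reduce both cases to the inverse Kostka expansion proved in Proposition~\ref{p:Kostka-LR}, handling the monomial quasisymmetric case first and then bootstrapping the fundamental case from it.

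First I would treat the $\{M_\al\}$ case. Since $G_w$ is symmetric and
\[
m_\la \, = \, \sum_{\al \,:\, {\sf sort}(\al)=\la} M_\al\ts,
\]
uniqueness of expansion in the basis $\{M_\al\}$ forces $c_{w\al}=c_{w\al'}$ whenever ${\sf sort}(\al)={\sf sort}(\al')$. Evaluating the $\GapP$ function $c_{w\al}$ at the canonical composition $\al=\la$ gives a $\GapP$ function $\tilde c_{w\la}$ with
\[
G_w \, = \, \sum_{\la\in \rU_n} \tilde c_{w\la}\. m_\la\ts.
\]

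Next I would apply the inverse Kostka identity \eqref{eq:inv-Kostka}, namely $m_\la=\sum_{\mu\unlhd\la} K^{-1}_{\la\mu}\ts s_\mu$, to extract the Schur coefficients:
\[
d_{w\mu} \, = \, \sum_{\la\ts\unrhd\ts \mu} \tilde c_{w\la}\. K^{-1}_{\la\mu}\ts.
\]
By Proposition~\ref{p:Kostka-LR} we have $\{K^{-1}_{\la\mu}\}\in\GapP$, and since partitions $\la\vdash |\mu|$ have poly-length encodings and dominance $\la\unrhd\mu$ is poly-time decidable, the standard closure of $\GapP$ under multiplication and exponential sums over poly-length witnesses gives $\{d_{w\mu}\}\in\GapP$.

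For the $\{F_\al\}$ case, I would convert to the $\{M_\al\}$ case via $F_\al=\sum_{\be\preccurlyeq \al}M_\be$. Substituting and reindexing,
\[
G_w \, = \, \sum_{\be} \. c'_{w\be}\. M_\be \quad \text{where} \quad c'_{w\be} \, = \, \sum_{\al\ts\succcurlyeq\ts\be} c_{w\al}\ts,
\]
and each coarsening $\al$ of $\be$ is encoded in poly length with $\al\succcurlyeq \be$ poly-time verifiable. Hence $c'_{w\be}\in\GapP$, and I invoke the first two steps with $c'$ in place of $c$.

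The main obstacle is verifying that the summations involved fit cleanly within the closure properties of $\GapP$: the summation over partitions $\la\unrhd\mu$ in the Kostka inversion step and the summation over coarsenings $\al\succcurlyeq\be$ in the fundamental-to-monomial step are both potentially of exponential size, and one must carefully argue that their summands are $\GapP$ functions of poly-length, poly-time-verifiable indices, so the standard closure ``$\GapP$ is closed under summation over NP-witnesses'' applies. Everything else is routine algebra once these closure properties are nailed down.
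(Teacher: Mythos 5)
Your proof is correct, but it runs in the opposite direction from the paper's and uses different ingredients. The paper handles the $\{F_\al\}$ case first by citing the Egge--Loehr--Warrington ``modified inverse Kostka matrix'' \cite[Thm~11]{ELW10}, which converts a fundamental quasisymmetric expansion directly into a Schur expansion via an explicit signed rim-hook formula, and then obtains the $\{M_\al\}$ case by M\"obius-inverting the change of basis $M_\al=\sum_{\be\unlhd\al}K^{-1}_{\al,\be}F_\be$ using Proposition~\ref{p:rho}. You instead do the $\{M_\al\}$ case first: the observation that symmetry of $G_w$ forces $c_{w\al}$ to be constant on ${\sf sort}$-classes collapses the expansion to the $m_\la$ basis, after which Proposition~\ref{p:Kostka-LR} finishes the job; the $\{F_\al\}$ case then follows from the \emph{positive} triangular substitution $F_\al=\sum_{\be\preccurlyeq\al}M_\be$, which needs no inversion at all. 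Your route is more self-contained --- it uses only results already proved in the paper and avoids both the external ELW theorem and the M\"obius inversion on the composition poset --- while the paper's route has the advantage of producing a concrete signed combinatorial formula (rim-hook tableaux) for the $F$-to-Schur transition. The closure properties you flag as the ``main obstacle'' are genuinely the standard ones ($\GapP$ is closed under products and under exponential sums indexed by polynomial-length, polynomial-time-verifiable witnesses); they are exactly what the paper itself uses implicitly in the proof of Proposition~\ref{p:Kostka-LR}, so there is no gap there.
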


For example, this theorem gives another proof of Proposition~\ref{prop:qt-analogues}.

\begin{proof}[Proof of Theorem~\ref{t:main-symQsymCOB}]
Take  \ts $\la\vdash m$.
By \cite[Thm~11]{ELW10}, we have:
$$
d_{w\la} \, = \,
    \sum_{\al\ts \ts \vDash m}\. \sum_{\be\preccurlyeq \al} \. K(\be,\la) \ts c_{w\al}\,,
$$
where  \ts $\{K(\be,\la)\}$ \ts are in $\GapP$. In fact, \ts $K(\be,\la)$ \ts
have an explicit signed combinatorial interpretation as a signed sum of
certain rim-hook tableaux.  This proves the first part.

For the second part, consider the  inverse coefficients \ts $K_{\al,\be}^{-1}$ \ts defined by
$$
M_\alpha(\bx) \, = \, \sum_{\be\unlhd \al } \. K_{\al,\be}^{-1} \. F_\be(\bx).
$$
By Proposition~\ref{p:rho}, coefficients \ts $\{K_{\al,\be}^{-1}\}$ \ts
are in \ts $\GapP$.  The result now follows from the first part.
\end{proof}

\begin{rem}\label{rem:LLT}
Combining Theorem~\ref{t:main-symQsymCOB} with \cite[Eq.~(82)]{HHL05} gives a \ts $\GapP$ \ts
formula for the Schur expansion of \emph{LLT polynomials}. While this expansion has been
proven to be Schur-positive \cite{GH06}, there is no known (unsigned) combinatorial interpretation
for this expansion (see e.g.\ \cite[$\S$7]{AU20}).
\end{rem}

\medskip

\section{Final remarks and open problems}\label{s:finrem}

\subsection{Historical notes}  \label{ss:finrem-hist}
Many different generalizations of Kostka numbers (weight multiplicities)
and the LR--coefficients (structure constants) have been studied
across the area, too many to review here.  In connection to
our Theorem~\ref{t:main-qt}, the most notable (and most general) are
generalizations of the LR rule to Hall--Littlewood polynomials \cite{Sch06},
to Macdonald polynomials \cite{Yip12},
and to Koornwinder polynomials \cite{Yam22} (see also references therein).

A generalization of the \emph{inverse Kostka numbers} \ts to the
Hall--Littlewood polynomials is given in \cite{Car98},
a quasisymmetric version is given in~\cite{ELW10},
and a noncommutative version is given in~\cite{AM23}.  A curious
signed combinatorial interpretation for the Kerov character polynomials
was given in \cite{Fer15}.

For Schubert polynomials and their structure constants, the literature
is again much too large to review.  We refer to \cite{Knu22} for an overview,
and to \cite{KZ23} for a recent breakthrough which includes a new type
of signed combinatorial interpretation in a special case.

For the plethysm, the literature is again much too large to review.
We refer to \cite{CGR84,LR11,Yang98} for algorithms
implying that several plethysm coefficients are in \ts $\GapP$.
We warn the reader that a formula in \cite[Thm~4.1]{Dor97}
does not give a \ts $\GapP$ \ts formula, at least not without
effort.  We refer to \cite{C+22} for a recent overview of the area.

\subsection{Dominance order}  \label{ss:musings-dom}
It was shown by Bogart (unpublished) and Brylawski \cite{Bry73}, that
the M\"obius function for the dominance order \ts $\cQ_n$ satisfies
$$
\mu(\al,\be) \. \in \. \{0,\pm 1\} \ \ \text{for all} \ \ \al, \ts \be \vdash n.
$$
This was reproved and generalized in a series of papers, including
\cite{BS97,Gre88,Kahn87,Ste98}.  The proofs also give easy \ts poly$(n)$ \ts
time algorithms for computing \ts $\mu(\al,\be)$.  See also
Remark~\ref{rem:inv-Kostka} for connections to the inverse
Kostka numbers.

There seem to be no closed formula for
the  M\"obius function on \ts $\cZ_{n,k}$ \ts
(cf.\ \cite{SV06} for a different order on strong compositions).
It would be interesting to see if this M\"obius function is
bounded.
We note that \. $\pp[\la \lhd \mu]\to 0$ \. for uniform \.
$\la,\mu \vdash n$\ts, as \ts $n \to \infty$,  \. \cite{Pit99}.  Only recently,
it was shown that \ts $\pp[\la \lhd \mu]= n^{\Theta(1)}$ \ts \cite{MMM21},
but the lower and upper bounds remain far apart.
It would also be interesting to find the corresponding result
for  \ts $\cZ_{n,k}\ts$.

\subsection{M\"obius inversion for lattices}  \label{ss:finrem-inv}
As we mentioned above, it is well known that the dominance order
\ts $\cq_m$ \ts is a lattice.  This holds for other posets
we consider in this paper.  The proof of the following result is
straightforward and will be omitted.

\begin{prop}\label{p:lattices}
Partial orders \. $\cZ_{n,k}$, \. $\mathcal{I}_{n,k}$,
\. and \. $\mathcal{L}_{n,k}$ \. are lattices.
\end{prop}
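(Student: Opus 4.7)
The plan is to encode each poset via partial-sum vectors and then write down explicit formulas for the meet and join. For a composition $\al$ (weak or strong, padded by trailing zeros to a common length $n$), set $S(\al) := (s_1, \ldots, s_n)$ with $s_i := \al_1 + \cdots + \al_i$. The paper's definition of dominance then reads
$$
\al \unlhd \be \ \ \Longleftrightarrow \ \ S(\al)_i \ge S(\be)_i \ \ \text{for all } i,
$$
so dominance is just the reversed coordinatewise order on partial-sum vectors. This immediately forces the candidates
$$
\al \wedge \be \, := \, S^{-1}\!\big(\max(S(\al), S(\be))\big), \qquad \al \vee \be \, := \, S^{-1}\!\big(\min(S(\al), S(\be))\big),
$$
with max and min taken coordinatewise, and reduces the whole question to checking that these candidate operations land in the required set of compositions; the universal property is then automatic from the coordinatewise characterization of $\unlhd$.

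For $\mathcal{I}_{n,k}$ the check is immediate: the coordinatewise max (resp.\ min) of two weakly increasing sequences in $\nn^n$ ending at $k$ is still weakly increasing and ends at $k$, hence encodes a weak composition in $\vrV_{n,k}$.

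For $\cZ_{n,k}$ the only additional issue is ruling out internal zeros in the resulting composition. I would argue by contradiction on the meet: if $t_i := \max(s_i^\al, s_i^\be)$ satisfies $t_i = t_{i-1} < k$, then (up to swapping $\al, \be$) the max at position $i-1$ is realized by $\al$, forcing $s_i^\al = s_{i-1}^\al$, i.e.\ $\al_i = 0$; but a strong composition has no interior zeros, so $\al$ must already have reached its terminal value $k$ by index $i-1$, giving $s_{i-1}^\al = k$ and contradicting $t_{i-1} < k$. The coordinatewise min is handled symmetrically.

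For $\mathcal{L}_{n,k}$ the extra condition to preserve is the Lehmer bound $\ga_i \le n-i$; I would verify it by a short case analysis on which of $\al, \be$ attains the extremum at positions $i-1$ and $i$. A representative case for the meet is $t_i = s_i^\al$ with $t_{i-1} = s_{i-1}^\be > s_{i-1}^\al$, which yields
$$
\ga_i \, = \, s_i^\al - s_{i-1}^\be \, \le \, \big(s_{i-1}^\al + (n-i)\big) - s_{i-1}^\be \, \le \, n-i,
$$
and the remaining cases and the join are entirely analogous. I expect this last step to be the only mildly technical part of the argument; the main obstacle is just disciplined bookkeeping in the $\mathcal{L}_{n,k}$ case analysis, while the other two posets drop out mechanically from the partial-sum dictionary.
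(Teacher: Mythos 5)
Your proof is correct. The paper declares this proof ``straightforward'' and omits it entirely, so there is no argument to compare against; your partial-sum dictionary with coordinatewise max/min is the standard route (the same one used for the classical dominance lattice on partitions), and the only substantive points --- that the candidate meet and join remain inside $\ComW_{n,k}$ (no internal zeros) and inside $L_{n,k}$ (the bound $\ga_i\le n-i$) --- are exactly the ones you identify and verify correctly.
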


For lattices of polynomial width, an alternative approach to the effective
M\"obius inversion is given by the \emph{Crosscut Theorem}
\cite[Cor.~3.9.4]{Sta-EC}.  Since the lattices in Proposition~\ref{p:lattices}
have exponential width, this approach does not give a new family
of \ts $\GapP$ \ts formulas for the structure constants.

\subsection{Signed combinatorial interpretations}\label{ss:finrem-signed}
Although our $\GapP$ formulas tend to be rather complicated, in most
cases they give the first signed  combinatorial interpretations for these
structure constants.  In fact, one can simplify one part of it as follows.
Suppose $f=g-h$ is a $\GapP$ formula, $g,h\in \SP$,  so that
\ts $h(w) = \#\{u \. : \. (w,u) \in B\}$ \ts for some \ts $\NP$ \ts language \ts $B\subseteq W^2$.
Let \ts $a,C\in \nn_{\ge 1}$  \ts be s.t.\ \ts $|u|\le C\ts |w|^a$ \ts
in the definition of \ts $h(w)$.  Observe that \. $2^{C n^a}-h = \#\{u \.:\. (w,u) \notin B\} \in \SP$,
for all \ts $|w|=n$.  We conclude that \ts $f=\big[g+(2^{C n^a}-h)\big] - 2^{C n^a}$ \ts
is a $\GapP$ formula with the negative part in~$\FP$.  In other words, adding a
sufficiently large power of two can turn a signed combinatorial interpretation into
the usual combinatorial interpretation.

\subsection{Binary encoding}\label{ss:finrem-binary}
Both the Kostka and the LR-coefficients are in $\SP$ in binary when written
as Gelfand--Tseitlin patterns.  It follows from the signed combinatorial
interpretation of the inverse Kostka numbers given in
\cite{ER90}, that  \. $K^{-1} \in \GapP$ \. in binary.  This can also be
derived from an argument in the proof of Proposition~\ref{p:Kostka-LR},
but an extra effort is needed. The same applies to other structure
constants in the paper.

\subsection{Quantum complexity}\label{ss:finrem-quantum}
By analogy with $\GapP$, one can similarly ask whether the structure
constants considered in the paper are in the quantum complexity classes \ts
$\SBQP$ \ts and \ts $\SBQP/\FP$.  This was proved for Kronecker and
plethysm coefficients in \cite{IS23} (see also \cite{B+23}).

\subsection{Combinatorial interpretations}\label{ss:finrem-CI}
Finding the right place for a combinatorial function is the first step towards
understanding its true nature.  Theorems~\ref{t:main-classic}--\ref{t:main-plethysm}
give the best inclusions we know for combinatorial interpretations of
the structure constants (see Remarks~\ref{rem:qt-signs},
\ref{rem:quasi-signs}, \ref{rem:Schubert-signs} and~\ref{rem:plethysm-signs}).
Restricting to integral functions, from the computational complexity point of view
our examples of fall into four buckets.

First, there are $\GapP$ functions where the sign is
computationally hard to determine.  Second, there are $\SP$ functions
where the sign is easy to compute and the absolute value
is in~$\SP$.  Third, there are nonnegative \ts $\GapP$ \ts
functions which have no \ts $\SP$ \ts formula (modulo standard
complexity assumptions).  Finally, there are nonnegative \ts
$\GapP$ \ts functions which have no \emph{known} \ts $\SP$ \ts formula.

\vskip.5cm

\subsection*{Acknowledgements}
We are grateful to Jim Haglund, Christian Ikenmeyer, Nick Loehr
and Alex Yong for interesting discussions and helpful remarks.
Special thanks to Darij Grinberg and Vasu Tewari for
careful reading of the paper and useful comments, and to Greta Panova
for suggesting Theorem~\ref{t:main-symQsymCOB}.
We benefitted from the symmetric functions catalogue \cite{Ale20}. We thank the anonymous referee for their helpful remarks. 

The first author was partially supported by the NSF grant CCF-2007891.
The second author was partially supported by the NSF MSPRF grant DMS-2302279.
This paper was finished while both authors attended the IPAM semester
in \emph{Geometry, Statistical Mechanics, and Integrability}.  We are grateful
for the hospitality.

\vskip.9cm

{\footnotesize

}


\begin{thebibliography}{ABCDEF}


\bibitem[Ale20]{Ale20}
Per Alexandersson, \emph{The symmetric functions catalog}, online, 2020; \ts
\href{https://www.symmetricfunctions.com}{www.symmetricfunctions.com}

\bibitem[AU20]{AU20}
Per Alexandersson and Joakim~Uhlin,
Cyclic sieving, skew Macdonald polynomials and Schur positivity,
\emph{Algebr.\ Comb.}~\textbf{3} (2020), 913--939.

\bibitem[AWW23]{AWvW23}
Farid~Aliniaeifard, Victor~Wang and Stephanie~van~Willigenburg,
P-partition power sums, \emph{European J.\ Combin.}~\textbf{110} (2023),
Paper No.~103688, 16~pp.


\bibitem[AM23]{AM23}
Edward~E.~Allen and Sarah~Mason,
A combinatorial interpretation of the noncommutative inverse Kostka matrix,
preprint (2022), 44~pp.; \ts {\tt arXiv:2207.05903}.


\bibitem[AS17]{AS17}
Sami~Assaf and Dominic~Searles,
Schubert polynomials, slide polynomials, Stanley symmetric functions and
quasi-Yamanouchi pipe dreams, \emph{Adv.\ Math.}~\textbf{306} (2017), 89--122.


\bibitem[B+20]{BDHMN20}
Cristina~Ballantine, Zajj~Daugherity, Angela~Hicks, Sarah~Mason and Elizabeth~Niese,
On quasisymmetric power sums,
\emph{J.~Combin.\ Theory, Ser.~A}~\textbf{175} (2020), 37~pp.




\bibitem[B+14]{BBSSZ14}
Chris~Berg, Nantel~Bergeron, Franco~Saliola, Luis~Serrano, and Mike~Zabrocki,
A lift of the Schur and Hall--Littlewood bases to non-commutative symmetric functions,
\emph{Canadian J.\ Math.}~\textbf{66} (2014), 525--565.

\bibitem[BB93]{BB93}
Nantel~Bergeron and Sara~Billey,
RC-graphs and Schubert polynomials,
\emph{Exp.\ Math.}~\textbf{2} (1993), 257--269.

\bibitem[Bil99]{Bil99}
Sara~C.~Billey,
Kostant polynomials and the cohomology ring for $G/B$,
\emph{Duke Math.~J.}~\textbf{96} (1999), 205--224.

\bibitem[BJS93]{BJS93}
Sara~C.~Billey, William~Jockusch and Richard~P.~Stanley,
Some combinatorial properties of Schubert polynomials,
\emph{J.~Algebraic Combin.}~\textbf{2} (1993), 345--374.




\bibitem[BS97]{BS97}
Andreas~Blass and Bruce~E.~Sagan, M\"obius functions of lattices,
\emph{Adv.\ Math.}~\textbf{127} (1997), 94--123.

\bibitem[Bor99]{Bor99}
Alexei~Borodin,
Longest increasing subsequences of random colored permutations,
\emph{Elec.\ J.\ Combin.}~\textbf{6} (1999), RP~13, 12~pp.

\bibitem[B+23]{B+23}
Sergey~Bravyi, Anirban~Chowdhury, David~Gosset,
Vojt\v{e}ch~Havl\'{\i}\v{c}ek and Guanyu~Zhu,
Quantum complexity of the Kronecker coefficients,
\emph{PRX Quantum}~\textbf{5} (2024), Paper~010329, 12~pp.


\bibitem[Bri02]{Bri02}
Michel~Brion,
Positivity in the Grothendieck group of complex flag varieties,
\emph{J.~Algebra}~\textbf{258} (2002), 137--159.

\bibitem[Bry73]{Bry73}
Thomas~Brylawski,
The lattice of integer partitions, \emph{Discrete Math.}~\textbf{6} (1973), 201--219.

\bibitem[BI08]{BI08}
Peter~B\"urgisser and Christian~Ikenmeyer,
The complexity of computing Kronecker coefficients,
in \emph{Proc.\ 20th FPSAC} (2008), 357--368.

\bibitem[BI13]{BI13}
Peter~B\"urgisser and Christian~Ikenmeyer,
Deciding positivity of Littlewood--Richardson coefficients,
\emph{SIAM J.\ Discrete Math.}~\textbf{27} (2013), 1639--1681.

\bibitem[Car98]{Car98}
Joaquin~O.~Carbonara,  A combinatorial interpretation of the inverse $t$-Kostka matrix,
\emph{Discrete Math.} \textbf{193} (1998), 117--145.

\bibitem[CP24]{CP23}
Swee~Hong~Chan and Igor~Pak, Equality cases of the Alexandrov--Fenchel inequality
are not in the polynomial hierarchy, \emph{Forum Math.~Pi}~\textbf{12} (2024),
Paper No.~e21, 38~pp.

\bibitem[CGR84]{CGR84}
Young-Ming Chen,  Adriano M.~Garsia and Jeffrey~Remmel,
Algorithms for plethysm, in \emph{Combinatorics and algebra},
AMS, Providence, RI, 1984, 109--153.

\bibitem[CDW12]{CDW12}
Matthias~Christandl, Brent~Doran and Michael~Walter, Computing multiplicities of
Lie group representations, in \emph{Proc.\ 53rd FOCS} (2012), IEEE, 639--648.

\bibitem[C+24]{C+22}
Laura~Colmenarejo, Rosa~Orellana, Franco~Saliola, Anne~Schilling and Mike~Zabrocki,
The mystery of plethysm coefficients, in
\emph{Open Problems in Algebraic Combinatorics},
AMS, Providence, RI, 275--292. 

\bibitem[DM06]{DM06}
Jes\'{u}s~A.~De~Loera and Tyrrell~B.~McAllister,
On the computation of Clebsch--Gordan coefficients and the dilation effect,
\emph{Experimental Math.}~\textbf{15} (2006), 7--19.

\bibitem[Dor97]{Dor97}
William~F.~Doran,~IV,
A plethysm formula for {$p_\mu(\underline x)\circ h_\lambda(\underline x)$},
\emph{Elec.\ J.~Combin.}~\textbf{4} (1997), no.~1, RP~14, 10~pp.

\bibitem[Duan03]{Duan03}
Haibao~Duan,  On the inverse Kostka matrix,
\emph{J.\ Combin.\ Theory, Ser.~A}~\textbf{103} (2003), 363--376.

\bibitem[ELW10]{ELW10}
Eric~Egge,  Nicholas~A.~Loehr and Gregory~S.~Warrington,
From quasisymmetric expansions to Schur expansions via a modified inverse Kostka matrix,
\emph{European J.\ Combin.}~\textbf{31} (2010), 2014--2027.

\bibitem[ER90]{ER90}
\"{O}mer~E\u{g}ecio\u{g}lu and Jeffrey~B.~Remmel,
A combinatorial interpretation of the inverse Kostka matrix,
\emph{Linear and Multilinear Algebra}~\textbf{26} (1990), 59--84.




\bibitem[F\'er15]{Fer15}
Valentin~F\'eray, Cyclic inclusion-exclusion,
\emph{SIAM J.\ Discrete Math.}~\textbf{29} (2015), 2284--2311.

\bibitem[F\'S11]{FS11}
Valentin~F\'eray and Piotr~\'Sniady,
Asymptotics of characters of symmetric groups related to Stanley character formula,
\emph{Annals of  Math.}~\textbf{173} (2011), 887--906.

\bibitem[FI20]{FI20}
Nick~Fischer and Christian~Ikenmeyer,
The computational complexity of plethysm coefficients,
\emph{Comput.\ Complexity}~\textbf{29} (2020), no.~2, Paper No.~8, 43~pp.

\bibitem[FK94]{FK94}
Sergey~Fomin and Anatol~N.~Kirillov,
Grothendieck polynomials and the Yang--Baxter equation,
in \emph{Proc.\ FPSAC}, DIMACS, Piscataway, NJ, 1994, 183--189.

\bibitem[For97]{For97}
Lance~Fortnow,
Counting complexity, in \emph{Complexity theory retrospective}~II,
Springer, New York, 1997, 81--107.




\bibitem[Ges84]{Ges84} Ira~M.~Gessel,
Multipartite $P$-partitions and inner products of skew Schur functions,
in \emph{Combinatorics and algebra}, AMS, Providence, RI, 1984, 289--317.

\bibitem[GJV05]{GJV05}
Ian~P.~Goulden, David~M.~Jackson and Ravi~Vakil,
Towards the geometry of double Hurwitz numbers,
\emph{Adv.\ Math.}~\textbf{198} (2005), 43--92.

\bibitem[Gre88]{Gre88}
Curtis~Greene,
A class of lattices with {M}\"{o}bius function {$\pm 1,0$},
\emph{European J.\ Combin.}~\textbf{9} (1988), 225--240.

\bibitem[GK86]{GK86}
Curtis~Greene and Daniel~J.~Kleitman,
Longest chains in the lattice of integer partitions ordered by majorization,
\emph{European J.\ Combin.}~\textbf{7} (1986), 1--10.

\bibitem[Hag08]{Hag08}
James~Haglund,
\emph{The $q,t$-Catalan numbers and the space of diagonal harmonics},
AMS, Providence, RI, 2008, 167~pp.

\bibitem[HHL05]{HHL05}
James~Haglund, Mark~Haiman, and Nicholas~Loehr,
A combinatorial formula for Macdonald polynomials,
\emph{Jour.\ AMS}~\textbf{18} (2005), 735--761.

\bibitem[H+11]{HLMvW11}
James~Haglund, Kurt~Luoto, Sarah~Mason, and Stephanie~van~Willigenburg,
Quasisymmetric Schur functions,
\emph{J.\ Combin.\ Theory, Ser.~A}~\textbf{118} (2011), 463--490.


\bibitem[GH06]{GH06}
Mark~Haiman and Ian~Grojnowski, Affine Hecke algebras and positivity of LLT and Macdonald polynomials,
preprint (2006), 29~pp.;  available at \ts
\href{https://math.berkeley.edu/~mhaiman/ftp/llt-positivity/new-version.pdf}{tinyurl.com/yn5p65hu}

\bibitem[HO02]{HO02}
Lane~A.~Hemaspaandra and Mitsunori~Ogihara,
\emph{The complexity theory companion},
Springer, Berlin, 2002, 369~pp.

\bibitem[Hiv00]{Hiv00}
Florent~Hivert, Hecke algebras, difference operators, and quasi-symmetric
functions, \emph{Adv.\ Math.}~\textbf{155} (2000), 181--238.

\bibitem[IMW17]{IMW17}
Christian~Ikenmeyer, Ketan~D.~Mulmuley and Michael~Walter,
On vanishing of Kronecker coefficients,
\emph{Comp.\ Complexity}~\textbf{26} (2017), 949--992.

\bibitem[IP22]{IP22}
Christian~Ikenmeyer and Igor~Pak,
What is in~$\SP$ and what is not?,  preprint (2022),
82~pp.; extended abstract
in \emph{Proc.\ 63rd FOCS} (2022), 860--871; \ts {\tt arXiv:2204.13149}.

\bibitem[IPP24]{IPP22}
Christian~Ikenmeyer, Igor~Pak and Greta Panova,
Positivity of the symmetric group characters is as hard as the polynomial time hierarchy,
\emph{Int.\ Math.\ Res.\ Notices} (2024), 8442--8458; 
extended abstract in \emph{Proc.\ 34th SODA} (2023), 3573--3586.

\bibitem[IS23]{IS23}
Christian~Ikenmeyer and Sathyawageeswar~Subramanian,
A remark on the quantum complexity of the Kronecker coefficients,
preprint (2023), 13~pp.; \ts {\tt arXiv:2307.02389};
extended abstract in \emph{Proc.\ 27th QIP} (2024).

\bibitem[Jack70]{Jac70}
Henry~Jack,
A class of symmetric polynomials with a parameter,
\emph{Proc.\ Roy.\ Soc.\ Edinburgh, Sect.~A} \ \textbf{69} (1970), 1--18.

\bibitem[JR69]{JR69}
Mick~Jagger and Keith~Richards, You can't always get what you want,
in \emph{Let It Bleed}, The Rolling Stones, 1969, 5~min.

\bibitem[JK81]{JK81}
Gordon~D.~James and Adalbert~Kerber,
\emph{The representation theory of the symmetric group},
Addison-Wesley, Reading, MA, 1981, 510~pp.

\bibitem[Kahn87]{Kahn87}
Jeff~Kahn,
On lattices with {M}\"{o}bius function {$\pm 1,0$},
\emph{Discrete Comput.\ Geom.}~\textbf{2} (1987), 1--8.




\bibitem[Knu22]{Knu22}
Allen~Knutson, Schubert calculus and quiver varieties, in
\emph{Proc.\ ICM} (2022, virtual), Vol.~VI, EMS Press, 4582--4605; available at \ts
\href{https://ems.press/books/standalone/278/5569}{ems.press/books/standalone/278/5569}

\bibitem[KT99]{KT99}
Allen~Knutson and Terence~Tao,
The honeycomb model of $\GL_n(\cc)$ tensor products~I:
Proof of the saturation conjecture,
{\em Jour.~AMS}~\textbf{12} (1999), 1055--1090.

\bibitem[KZ23]{KZ23}
Allen~Knutson and Paul~Zinn-Justin,
Schubert puzzles and integrability III: separated descents,
preprint (2023), 42~pp.; \ts {\tt arXiv:2306.13855}.

\bibitem[Koh91]{Koh91}
Axel~Kohnert,
\emph{Weintrauben, Polynome, Tableaux} (in German),
Dissertation, Universit\"at Bayreuth, 1990, 83~pp.;
available at \href{https://epub.uni-bayreuth.de/id/eprint/1052/}{epub.uni-bayreuth.de/id/eprint/1052/}


\bibitem[LS08]{LS08}
Michael~Larsen and Aner~Shalev,
Characters of symmetric groups: sharp bounds and applications,
\emph{Invent.\ Math.}~\textbf{174} (2008), 645--687.

\bibitem[Las01]{Las00}
Alain~Lascoux,
Transition on Grothendieck polynomials, in \emph{Physics and combinatorics},
World Sci., River Edge, NJ, 2001, 164--179.

\bibitem[LS90]{LS90}
Alain~Lascoux and Marcel-Paul~Sch\"utzenberger,
Keys \& standard bases, in \emph{Invariant theory and tab-leaux},
Springer, New York, 1990, 125--144

\bibitem[LS82a]{LS82a}
Alain~Lascoux and Marcel-Paul~Sch\"utzenberger,
Polyn\^{o}mes de Schubert (in French),
\emph{C.\ts\/R.\/ Math.\/ Acad.\/ Sci.\/ Paris}~\textbf{294}
(1982), no.~13, 447--450.

\bibitem[LS82b]{LS82b}
Alain~Lascoux and Marcel-Paul~Sch\"utzenberger,
Structure de Hopf de l'anneau de cohomologie et de l'anneau de Grothendieck
d'une vari\'et\'e de drapeaux (in French),
\emph{C.\ts\/R.\/ Math.\/ Acad.\/ Sci.\/ Paris}~\textbf{295} (1982), no.~11, 629--633.

\bibitem[LLT97]{LLT97}
Alain~Lascoux, Bernard~Leclerc and Jean-Yves~Thibon,
Ribbon tableaux, Hall--Littlewood functions, quantum affine algebras, and unipotent
varieties, \emph{J.~Math.\ Phys.}~\textbf{38} (1997), 1041--1068.

\bibitem[Lit61]{Lit61}
Dudley~E.~Littlewood, On certain symmetric functions,
\emph{Proc.\ LMS}~\textbf{11} (1961), 485--498.

\bibitem[LR11]{LR11}
Nicholas~A.~Loehr and Jeffrey~B.~Remmel,
A computational and combinatorial expos\'e of plethystic calculus,
\emph{J.~Algebraic Combin.}~\textbf{33} (2011), 163--198.

\bibitem[Mac88]{Mac88}
Ian~G.~Macdonald, A new class of symmetric functions,
\emph{S\'em.\ Lothar.\ Combin.}~\textbf{B20a} (1988), Art.~20, 131--171.

\bibitem[Mac95]{Mac95}
Ian~G.~Macdonald, \emph{Symmetric functions and Hall polynomials}
(Second ed.), Oxford Univ.~Press, New York, 1995, 475~pp.

\bibitem[Mas09]{Mas09}
Sarah~Mason, An explicit construction of type A Demazure atoms,
\emph{J.\ Algebraic Combin.}~\textbf{29} (2009), 295--313.

\bibitem[MMM21]{MMM21}
Stephen~Melczer, Marcus~Michelen and Somabha~Mukherjee,
Asymptotic bounds on graphical partitions and partition comparability,
\emph{Int.\ Math.\ Res.\ Not.} (2021), 2842--2860.

\bibitem[NST24]{NST24}
Philippe~Nadeau,  Hunter~Spink and Vasu~Tewari,
Quasisymmetric divided differences, preprint (2024), 54~pp.;
\ts {\tt arXiv:2406.01510}.


\bibitem[NT23]{NT23}
Philippe~Nadeau and Vasu~Tewari,
$P$-partitions with flags and back stable quasisymmetric functions,
preprint (2023), 19~pp.; \ts
{\tt arXiv:2303.09019}.

\bibitem[NR85]{NR85}
Oscar~Nava and Gian-Carlo~Rota,
Plethysm, categories, and combinatorics,
\emph{Adv.\ Math.}~\textbf{58} (1985), 61--88.

\bibitem[OV96]{OV96}
Andrei~Okounkov and Anatoly~Vershik, A new approach to representation theory
of symmetric groups, \emph{Selecta Math.}~\textbf{2} (1996), 581--605.

\bibitem[Pak00]{Pak-ribbon}
Igor~Pak, Ribbon tile invariants, \emph{Trans.\ AMS}~\textbf{352} (2000), 5525--5561.

\bibitem[Pak19]{Pak}
Igor~Pak, Combinatorial inequalities, \emph{Notices AMS}~\textbf{66} (2019), 1109--1112;
an expanded version of the paper is available at \ts
\href{https://www.math.ucla.edu/~pak/papers/full-story1.pdf}{tinyurl.com/py8sv5v6}

\bibitem[Pak24]{Pak-OPAC}
Igor~Pak, What is a combinatorial interpretation?, in
\emph{Open Problems in Algebraic Combinatorics}, 
AMS, Providence, RI, 2024, 191--260.

\bibitem[PP13]{PP13}
Igor~Pak and Greta~Panova, Strict unimodality of $q$-binomial coefficients,
\emph{C.\ts\/R.\/ Math.\/ Acad.\/ Sci.\/ Paris} \textbf{351} (2013), 415--418.

\bibitem[PP17]{PP17}
Igor~Pak and Greta~Panova,
On the complexity of computing Kronecker coefficients,
\emph{Comput.\ Comp-lexity} \textbf{26} (2017), 1--36.

\bibitem[PP20]{PP20}
Igor~Pak and Greta~Panova,  Bounds on Kronecker coefficients via contingency tables,
\emph{Linear Alg.\  Appl.} \textbf{602} (2020), 157--178.

\bibitem[PPY19]{PPY19}
Igor~Pak, Greta~Panova and Damir~Yeliussizov,
On the largest Kronecker and Littlewood--Richardson coefficients,
\emph{J.\ Combin.\ Theory, Ser.~A}~\textbf{165} (2019), 44--77.

\bibitem[PY23]{PY23}
Jianping~Pan and Tianyi~Yu,
A bijection between K-Kohnert diagrams and reverse set-valued tableaux,
\emph{Elec.\ J.\ Combin.}~\textbf{30} (2023), no.~4.26, 38 pp.

\bibitem[Pan24]{Pan23}
Greta~Panova,
Complexity and asymptotics of structure constants, in
\emph{Open Problems in Algebraic Combinatorics},
AMS, Providence, RI, 2024, 61--86.

\bibitem[Pit99]{Pit99}
Boris~Pittel, Confirming two conjectures about the integer partitions,
\emph{J.~Combin.\ Theory, Ser.~A}~\textbf{88} (1999), 123--135.

\bibitem[PS09]{PS09}
Alexander~Postnikov and Richard~P.~Stanley, Chains in the Bruhat order,
\emph{J.\ Algebraic Combin.}~\textbf{29} (2009), 133--174.

\bibitem[Roi96]{Roi96}
Yuval~Roichman, Upper bound on the characters of the symmetric groups,
\emph{Invent.\ Math.}~\textbf{125} (1996), 451--485.

\bibitem[RY15]{RY15}
Colleen~Ross and  Alexander~Yong,
Combinatorial rules for three bases of polynomials,
\emph{S\'em.\ Lothar.\ Combin.}~\textbf{74} (2015), Art.~B74a, 11~pp.

\bibitem[Sag01]{Sag01}
Bruce~E.~Sagan,
\emph{The symmetric group}, Springer, New York, 2001, 238~pp.

\bibitem[SV06]{SV06}
Bruce~Sagan and Vincent~Vatter,
The M\"obius function of a composition poset,
\emph{J.~Algebraic Combin.}~\textbf{24} (2006), 117--136.




\bibitem[Schw06]{Sch06}
Christoph~Schwer,
Galleries, Hall--Littlewood polynomials, and structure constants
of the spherical Hecke algebra, \emph{Int.\ Math.\ Res.\ Not.} (2006),
Art.~75395, 31~pp.

\bibitem[Ser77]{Ser77}
Jean-Pierre~Serre,
\emph{Linear representations of finite groups},
Springer, New York, 1977, 170~pp.

\bibitem[Sta99]{Sta-EC}
Richard~P.~Stanley, {\em Enumerative Combinatorics}, vol.~1 (Second ed.)
and vol.~2, Cambridge Univ.~Press, 2012 and~1999, 626~pp.\ and 581~pp.




\bibitem[Sta00]{Sta00}
Richard~P.~Stanley,
Positivity problems and conjectures in algebraic combinatorics,
in \emph{Mathematics: frontiers and perspectives},
AMS, Providence, RI, 2000, 295--319.

\bibitem[SW85]{SW85}
Dennis~W.~Stanton and Dennis~E.~White,
A Schensted algorithm for rim hook tableaux,
\emph{J.\ Combin.\ Theory, Ser.~A} \textbf{40} (1985), 211--247.

\bibitem[Ste98]{Ste98}
John~R.~Stembridge,
The partial order of dominant weights,
\emph{Adv.\ Math.}~\textbf{136} (1998), 340-364.

\bibitem[Whi83]{Whi83}
Dennis~White, A bijection proving orthogonality of the characters of~$S_n$,
\emph{Adv.\ Math.}~\textbf{50} (1983), 160--186.

\bibitem[Whi85]{Whi85}
Dennis~White, Orthogonality of the characters of~$S_n$,
\emph{J.\ Combin.\ Theory, Ser.~A}~\textbf{40} (1985), 265--275.


\bibitem[Wil20]{vWil20}
Stephanie~van~Willigenburg,
The shuffle conjecture, \emph{Bull.~AMS}~\textbf{57} (2020), 77--89.

\bibitem[Yam22]{Yam22}
Kohei~Yamaguchi,
A Littlewood--Richardson rule for Koornwinder polynomials,
\emph{J.\ Algebraic Combin.} \textbf{56} (2022), 335--381.

\bibitem[Yang98]{Yang98}
Mei~Yang,
An algorithm for computing plethysm coefficients,
\emph{Discrete Math.}~\textbf{180} (1998), 391--402.

\bibitem[Yip12]{Yip12}
Martha~Yip,
A Littlewood--Richardson rule for Macdonald polynomials,
\emph{Math.~Z.}~\textbf{272} (2012), 1259--1290.

\end{thebibliography}
\end{document}